\documentclass[10pt]{amsart}
\usepackage{amsfonts,amssymb,mathtools,amsmath, array, amsthm}
\usepackage{geometry}
\usepackage{hyperref}
\usepackage{tikz}
\usepackage{textcomp, color}
\usepackage[capitalise]{cleveref}

\theoremstyle{plain}

\newtheorem{theo}{Theorem}[section]

\newtheorem{theorem}[theo]{Theorem}
\newtheorem{lemma}[theo]{Lemma}

\newtheorem{proposition}[theo]{Proposition}
\newtheorem{corollary}[theo]{Corollary}

\newtheorem{remark}[theo]{Remark}
\newtheorem{definition}[theo]{Definition}

\DeclareMathOperator{\Aut}{Aut}

\DeclareMathOperator{\Br}{Br}
\DeclareMathOperator{\SmallSkewbrace}{SmallSkewbrace}

\newtheorem{pro}[theo]{Proposition}
\newtheorem{exa}[theo]{Example}

\begin{document}
\baselineskip13.2pt
\title {THE BAER TRANSFORM FOR SKEW BRACES}

\author{G\"ulİn Ercan}
\address{Department of Mathematics, Middle East Technical University, Ankara, Turkey}
\email{ercan@metu.edu.tr}

\author{\c{S}\"ukran G\"ul}
\address{Department of Mathematics, Middle East Technical University, Ankara, Turkey}
\email{gsukran@metu.edu.tr}

\author{\.{I}smaİl \c{S}.\ G\"ulo\u{g}lu}
\address{Department of Mathematics, Do\u{g}u\c{s} University, \.{I}stanbul, Turkey}
\email{iguloglu@dogus.edu.tr}

\author{M.\ YASİR KIZMAZ}
\address{Department of Mathematics, Bilkent University, Ankara, Turkey}
\email{yasirkizmaz@bilkent.edu.tr}

\begin{abstract}
We introduce and study the Baer transform for finite skew braces whose
additive group has odd order and nilpotency class at most $2$. Starting from
Baer's classical construction for nilpotent groups of odd order and nilpotency class $2$, we replace the
additive group $(X,+)$ by an abelian group $(X,\oplus)$ on the same underlying
set and prove that
\[
\Br(X)=(X,\oplus,\cdot)
\]
is a finite skew brace of abelian type. We show that the Baer transform preserves
automorphisms, strong left ideals, ideals and central ideals, and we establish
its compatibility with quotients. We then compare structural properties of
$X$ and $\Br(X)$. As applications, we show that direct product
decompositions into ideals are preserved by the Baer transform, giving a
criterion for indecomposability of skew braces. Finally, for finite
$p$-skew braces with $p$ odd, we apply Thompson critical subgroups together
with the Baer transform to embed suitable automorphism groups into
automorphism groups of skew braces of abelian type.
\end{abstract}

	\subjclass[2020]{16T25;	20D15; 20N99}
	
	\keywords{skew braces, Baer's trick}

\maketitle 
    
\section*{Introduction}

Skew braces were introduced by Guarnieri and Vendramin \cite{GV} as a generalization
of Rump's braces and have become an important algebraic tool in the study of
set-theoretic solutions of the Yang--Baxter equation. A skew (left) brace is a
set $X$ endowed with two group operations $+$ and $\cdot$ satisfying
\[
a\cdot(b+c)=a\cdot b-a+a\cdot c
\]
for all $a,b,c\in X$. If the additive group $(X,+)$ is abelian, one obtains
the classical notion of a left brace, or equivalently, a skew brace of
abelian type.

Many structural questions for skew braces are considerably more difficult
than in the abelian-type case, because the additive group need not be
abelian. The purpose of this paper is to show that, when the additive group
has odd order and nilpotency class at most $2$, one can associate to $X$ a
canonical brace of abelian type with the same multiplication. This is achieved
by applying Baer's classical trick to the additive group.

Let $(G,+)$ be a group of odd order and nilpotency class at most $2$. Baer's
construction defines a new operation
\[
x\oplus y=x+y+\frac{1}{2}[y,x]_+,
\]
where
\[
[y,x]_+=y+x-y-x,
\]
and where $\frac{1}{2}z$ denotes the unique element $y$ satisfying $y+y=z$. The resulting
group $(G,\oplus)$ is abelian. Moreover, this construction preserves the
identity element, inverses, element orders, and automorphisms of $(G,+)$.

We apply this construction to the additive group of a skew brace
$X=(X,+,\cdot)$. Our first main result proves that
\[
\Br(X)=(X,\oplus,\cdot)
\]
is a brace of abelian type. We call $\Br(X)$ the Baer transform of $X$.
Thus the Baer transform replaces the additive group of $X$ by an abelian
group, while keeping the multiplicative group unchanged.

The usefulness of the construction comes from the fact that it retains a
large amount of the original brace structure. We prove that every
automorphism of $X$ is an automorphism of $\Br(X)$, and that every strong
left ideal, ideal and central ideal of $X$ remains respectively a strong left
ideal, ideal and central ideal of $\Br(X)$. We also prove that the Baer
transform is compatible with quotients, that is
\[
\Br(X/I)=\Br(X)/I
\]
for every ideal $I$ of $X$.

These basic properties allow us to compare several nilpotency and solvability-type conditions for $X$ and $\Br(X)$. The fact that $X$ is left nilpotent if and only if $\Br(X)$ is left nilpotent follows directly from \cite[Theorem 4.8]{CSV}. In this paper, we find an upper bound for the left nilpotency class of one in terms of that of the other. We also prove that right nilpotency, central nilpotency, solvability and supersolvability passes from $X$ to $\Br(X)$, the converses
do not hold in general. For the bi-skew property, the
Baer transform is more delicate: if $X$ is bi-skew, then $\Br(X)$ is bi-skew
if and only if, for every $a\in X$, the map
\[
g_a:X\longrightarrow X,\qquad
g_a(b)=b+\frac{1}{2}[b,a]_+
\]
is an automorphism of the multiplicative group $(X,\cdot)$. This gives an
explicit obstruction to preserving the bi-skew property.

The last part of the paper gives two applications. First, we prove that
internal direct product decompositions of $X$ into ideals are preserved by
the Baer transform. Consequently, if $\Br(X)$ is indecomposable,
then $X$ is indecomposable. This gives a useful criterion for
indecomposability of skew braces by passing to an abelian type brace.

Secondly, we apply the Baer transform to automorphisms of finite $p$-skew
braces. Let $p$ be odd and suppose that $(X,+)$ is a finite $p$-group. If
$C$ is a Thompson critical subgroup of $(X,+)$, then $C$ has nilpotency class
at most $2$, and hence its Baer transform $\Br(C)$ is defined. We use this
to show that, under a natural condition on the $p$-core of $\Aut(X)$, the
restriction map embeds $\Aut(X)$ into $\Aut(\Br(C))$. In particular, if
$\Aut(\Br(C))$ is a $p$-group, then $\Aut(X)$ has no non-trivial
$p'$-subgroups.

The paper is organized as follows. In Section~1 we recall Baer's trick and
construct the Baer transform of a skew brace. In Section~2 we prove the basic
properties of the transform. In Section~3 we
study brace-theoretic properties preserved by the Baer transform, including
left and right nilpotency, two-sidedness, bi-skewness, central nilpotency,
solvability and supersolvability. In Section~4 we give applications to direct
decompositions and to automorphisms of finite $p$-skew braces via Thompson
critical subgroups.

\textbf{Throughout the paper all skew braces are finite. We use standard notation
and terminology for skew braces as in \cite{CV}.
}

\section{Baer's trick and the Baer transform}

We begin by recalling Baer's classical construction for nilpotent groups of
class $2$ and odd order{\cite[Lemma 4.37]{isa}}.

\begin{lemma}[Baer's trick]\label{lem:baer}
Let $(G,+)$ be a nilpotent group of odd order with nilpotency class at most
$2$. Then there exists an operation (called Baer addition) $x\oplus y$ defined for all
$x,y\in G$ such that $(G,\oplus)$ is an abelian group. Moreover, the following
hold:
\begin{enumerate}
\item the order of $x$ in $(G,+)$ is equal to the order of $x$ in
$(G,\oplus)$ for all $x\in G$;
\item the inverse of $x$ in $(G,+)$ coincides with the inverse of $x$ in
$(G,\oplus)$ for all $x\in G$;
\item the identity element of $(G,+)$ coincides with that of $(G,\oplus)$;
\item every automorphism of $(G,+)$ is also an automorphism of $(G,\oplus)$.
\end{enumerate}
\end{lemma}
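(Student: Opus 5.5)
We define the Baer addition explicitly by $x\oplus y=x+y+\frac12[y,x]_+$, where $[y,x]_+=y+x-y-x$ and $\frac12 z$ denotes the unique square root of $z$ under $+$. Since $|G|$ is odd, the map $z\mapsto z+z$ is a bijection on every cyclic subgroup. Moreover, if $z$ lies in the centre $Z=Z(G,+)$, then $\frac12 z$ is a power of $z$, hence also central. Because $G$ has class at most $2$, every commutator $[y,x]_+$ lies in $[G,G]\le Z$. We will use the standard class-$2$ identities freely: commutators are central, $[\cdot,\cdot]_+$ is biadditive, $[x,y]_+=-[y,x]_+$, and $[x,x]_+=0$. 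Additive notation will be used throughout, with central elements commuting past everything.

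\textbf{Abelian group axioms.} Commutativity follows from $y+x=x+y+[y,x]_+$ together with centrality. This gives
\[
y\oplus x=y+x+\tfrac12[x,y]_+=x+y+[y,x]_+-\tfrac12[y,x]_+=x\oplus y .
\]
The identity is $0$, since $[0,x]_+=0$. This also proves (3). The inverse of $x$ is $-x$, because $[-x,x]_+=0$, which proves (2).

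For associativity, expand $(x\oplus y)\oplus z$ and $x\oplus(y\oplus z)$. By biadditivity and centrality of commutators, and because commutators involving a central summand vanish, both sides reduce to
\[
x+y+z+\tfrac12\bigl([y,x]_++[z,x]_++[z,y]_+\bigr).
\]
Here we use that $\frac12$ is additive on the abelian group $Z$. This bookkeeping is the main, though routine, obstacle.

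\textbf{Property (1).} First show that $\oplus$ agrees with $+$ on any cyclic subgroup $\langle x\rangle$, since $[kx,x]_+=0$. By induction, the $\oplus$-multiples of $x$ are then exactly $kx$, so the orders coincide.

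\textbf{Property (4).} Any automorphism $\alpha$ of $(G,+)$ preserves commutators, maps $Z$ to $Z$, and commutes with $\frac12$, since $\alpha(\frac12 z)+\alpha(\frac12 z)=\alpha(z)$ and square roots are unique. Hence $\alpha(x\oplus y)=\alpha(x)\oplus\alpha(y)$. Since $\alpha$ is a bijection, it is an automorphism of $(G,\oplus)$.

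Alternatively, the whole lemma can be quoted directly from \cite[Lemma 4.37]{isa}, with the computation above serving as a verification.
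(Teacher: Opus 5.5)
Your proposal is correct and takes the same route the paper relies on: the paper gives no argument beyond citing \cite[Lemma 4.37]{isa} and recording the formula $x\oplus y=x+y+\frac{1}{2}[y,x]_+$ in Remark~\ref{rem:baer-formula}. Your computation (commutativity via $y+x=x+y+[y,x]_+$, associativity via biadditivity of central commutators, and the checks of (1)--(4)) is the standard verification behind that citation; the one point to tighten is global uniqueness of $\frac{1}{2}z$, which follows from your cyclic-subgroup remark once you note that any $w$ with $w+w=z$ has odd order, so $w\in\langle w\rangle=\langle w+w\rangle=\langle z\rangle$.
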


\begin{remark}\label{rem:baer-formula}
In the situation of Baer's trick, we use the following notation. For $x\in G$,
the symbol $\dfrac{1}{2}  x$ denotes the unique element $y$ satisfying $y+y=x$. If
\[
[y,x]_+=y+x-y-x
\]
denotes the commutator in the group $(G,+)$, then
\[
x\oplus y=x+y+\frac{1}{2}[y,x]_+.
\]
Moreover, if $x$ and $y$ commute in $(G,+)$, then
\[
x\oplus y=x+y.
\]
If $u$ and $v$ commute in $(G,+)$, then
\[
\frac{1}{2}(u+v)=\frac{1}{2}u+\frac{1}{2}v.
\]
\end{remark}

Now we apply Baer's trick to the additive group of a skew brace.

\begin{theorem}\label{thm:baer-transform}
Let $X=(X,+,\cdot)$ be a skew brace such that the additive group
$(X,+)$ has odd order and nilpotency class at most $2$. Let $\oplus$ be the
Baer addition associated to the group $(X,+)$. Then
\[
\operatorname{Br}(X)=(X,\oplus,\cdot)
\]
is a brace, that is, a skew brace of abelian type.
\end{theorem}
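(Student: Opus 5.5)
We have to show that $\operatorname{Br}(X)$ satisfies the brace axiom $a\cdot(b\oplus c)=a\cdot b\ominus a\oplus a\cdot c$. Here $\ominus a$ denotes the inverse of $a$ in $(X,\oplus)$. By Lemma~\ref{lem:baer}(2) this inverse is $-a$, and by Lemma~\ref{lem:baer} the group $(X,\oplus)$ is abelian. Since $(X,\cdot)$ is unchanged and is already a group, the brace identity is the only thing to verify.

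\textbf{Step 1: reduce to $\lambda$-maps.} For $a\in X$ put $\lambda_a(x)=-a+a\cdot x$. It is standard for skew braces that $\lambda_a\in\Aut(X,+)$. Then $a\cdot x=a+\lambda_a(x)$, and by Lemma~\ref{lem:baer}(4) the map $\lambda_a$ is also an automorphism of $(X,\oplus)$. Hence
\[
a\cdot(b\oplus c)=a+\bigl(\lambda_a(b)\oplus\lambda_a(c)\bigr).
\]
Writing $u=\lambda_a(b)$ and $v=\lambda_a(c)$, the axiom becomes the purely group-theoretic identity
\[
a+(u\oplus v)=(a+u)\oplus(-a)\oplus(a+v)\qquad(\ast)
\]
in the class-$\le 2$ group $(X,+)$ of odd order.

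\textbf{Step 2: comparing $+$ and $\oplus$.} In class at most $2$, commutators $[x,y]_+$ are central in $(X,+)$ and biadditive. Because the order is odd, $\tfrac12$ of a central element is again central; halving is additive on commutators (Remark~\ref{rem:baer-formula}), and $\tfrac12[x,y]_+ = -\tfrac12[y,x]_+$. Moreover, $\oplus$ agrees with $+$ whenever one argument is central, by Remark~\ref{rem:baer-formula}. From $x\oplus y=x+y+\tfrac12[y,x]_+$ we therefore get
\[
x+y=x\oplus y\oplus\tfrac12[x,y]_+ .
\]
We also need $[a,u\oplus v]_+=[a,u]_+ + [a,v]_+$. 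This holds because $u\oplus v$ differs from $u+v$ by a central element, and commutators are bilinear with central elements contributing nothing.

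\textbf{Step 3: verify $(\ast)$.} Apply the formula from Step 2 to each side. The left side becomes
\[
a\oplus u\oplus v\oplus\tfrac12[a,u]_+\oplus\tfrac12[a,v]_+ .
\]
The right side becomes
\[
\bigl(a\oplus u\oplus\tfrac12[a,u]_+\bigr)\oplus(-a)\oplus\bigl(a\oplus v\oplus\tfrac12[a,v]_+\bigr).
\]
Since $\oplus$ is commutative and $-a$ is the $\oplus$-inverse of $a$, the right side collapses to the same expression as the left. This proves $(\ast)$, and with it the theorem.

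The main obstacle is the bookkeeping in Steps 2–3. One must be careful with sign conventions for $[\cdot,\cdot]_+$, and must check that halving commutes with the central corrections. Both follow from class $2$, bilinearity of commutators, and the uniqueness of square roots in odd order. Everything else is formal.
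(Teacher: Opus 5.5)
Your proof is correct, but it is organized differently from the paper's.

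The paper computes the lambda map of $\Br(X)$ directly as $\mu_a(b)=-a\oplus(a\cdot b)=\lambda_a(b)+\frac{1}{2}[a,a\cdot b]_+$. It checks by commutator bilinearity that $\mu_a$ is an endomorphism of the original group $(X,+)$, proves injectivity, and only then applies Lemma~\ref{lem:baer}(4) to $\mu_a$.

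You instead apply Lemma~\ref{lem:baer}(4) to $\lambda_a$. This removes all brace structure and leaves the identity $(\ast)$. That identity says exactly that $\nu_a(x)=-a\oplus(a+x)$ is $\oplus$-additive, i.e.\ that $(X,\oplus,+)$ is a brace. This is Corollary~\ref{cor:trivial-baer-brace} applied to the trivial skew brace on $(X,+)$. So you factor $\mu_a=\nu_a\circ\lambda_a$ and prove the trivial case first. The paper proves the general theorem and obtains the trivial case as a corollary.

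Your route makes it clear that the only brace-theoretic input is $\lambda_a\in\Aut(X,+)$. It also needs no injectivity check, since the brace axiom is just an identity. The paper's route gives the extra fact $\mu_a\in\Aut(X,+)$, together with the explicit formula for $\mu_a$ that is reused throughout Sections 2 and 3. Your computation recovers the same formula: it gives $\nu_a(x)=x+\frac{1}{2}[a,x]_+$, hence $\mu_a(b)=\lambda_a(b)+\frac{1}{2}[a,\lambda_a(b)]_+$.
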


\begin{proof}
We need to prove the compatibility condition for the operations $\oplus$ and
$\cdot$. Equivalently, for each $a\in X$, we need to prove that the map
\[
\mu_a:(X,\oplus)\longrightarrow (X,\oplus),
\qquad
\mu_a(b)=\ominus a\oplus (a\cdot b),
\]
is an automorphism of $(X,\oplus)$.

By Lemma~\ref{lem:baer}, the inverse of $a$ with respect to $\oplus$ is the
inverse of $a$ in the group $(X,+)$, namely $-a$. Hence
\[
\mu_a(b)=-a\oplus (a\cdot b).
\]
Using the definition of $\oplus$, we obtain
\[
\mu_a(b)=-a+a\cdot b+\frac{1}{2}[a\cdot b,-a]_+.
\]
Using the class two commutator identities in $(X,+)$, we have
\[
[a\cdot b,-a]_+=[a,a\cdot b]_+.
\]
Therefore
\[
\mu_a(b)=\lambda_a(b)+\frac{1}{2}[a,a\cdot b]_+,
\]
where $\lambda_a(b)=-a+a\cdot b$ is the usual lambda map of $X$.

Since $(X,+)$ has
nilpotency class at most $2$, the subgroup $\gamma_2(X,+)$ is central. Hence,
whenever $u,v\in \gamma_2(X,+)$, the elements $u$ and $v$ commute and so $\frac{1}{2}(u+v)=\frac{1}{2}u+\frac{1}{2}v.$ Let $b,c\in X$. Since $\lambda_a$ is an automorphism of $(X,+)$ and all
additive commutators are central, we have
\[
\begin{aligned}
\mu_a(b+c)
&=\lambda_a(b+c)+\frac{1}{2}[a,a\cdot (b+c)]_+ \\
&=\lambda_a(b+c)+\frac{1}{2}[a,a\cdot b-a+a\cdot c]_+ \\
&=\lambda_a(b+c)+\frac{1}{2}([a,a\cdot b]_+ + [a,a\cdot c]_+) \\
&=\lambda_a(b)+\lambda_a(c)
   +\frac{1}{2}[a,a\cdot b]_++\frac{1}{2}[a,a\cdot c]_+ \\
&=\mu_a(b)+\mu_a(c).
\end{aligned}
\]
Thus $\mu_a$ is an endomorphism of $(X,+)$.

We next prove that $\mu_a$ is injective. Let $b\in \ker \mu_a$. Then
\[
\lambda_a(b)+\frac{1}{2}[a,a\cdot b]_+=0.
\]
Since $\frac{1}{2}[a,a\cdot b]_+\in \gamma_2(X,+)\leq Z(X,+)$, it follows that
$\lambda_a(b)\in Z(X,+)$. As
$a\cdot b=a+\lambda_a(b)$,
we have
$ [a,a\cdot b]_+=[a,a+\lambda_a(b)]_+=0$.
Thus, we have $\lambda_a(b)+\frac{1}{2} \ 0=\lambda_a(b)=0$.

Since $\lambda_a$ is an automorphism of $(X,+)$, we obtain $b=0$. Hence
$\mu_a$ is injective. Since $X$ is finite, $\mu_a$ is an automorphism of
$(X,+)$.

By Lemma~\ref{lem:baer}, every automorphism of $(X,+)$ is also an
automorphism of $(X,\oplus)$. Hence
\[
\mu_a\in \operatorname{Aut}(X,\oplus)
\]
for every $a\in X$. Therefore $(X,\oplus,\cdot)$ is a  brace.
\end{proof}

\begin{definition}
Under the hypotheses of Theorem~\ref{thm:baer-transform}, we call $\operatorname{Br}(X)=(X,\oplus,\cdot)$ the Baer transform of $X$.
\end{definition}

\begin{corollary}\label{cor:trivial-baer-brace}
Let $(G,\cdot)$ be a group of odd order with nilpotency class at most $2$.
Then the Baer addition associated to $(G,\cdot)$ turns $G$ into a brace $(G,\oplus,\cdot).$
\end{corollary}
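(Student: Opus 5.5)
The plan is to apply Theorem~\ref{thm:baer-transform} to the trivial skew brace on the group $G$. Since the brace should have multiplicative group $(G,\cdot)$, I will take the additive operation equal to the multiplication, i.e.\ consider $X=(G,\cdot,\cdot)$, the trivial skew brace in which both operations are the group operation of $G$. The first step is to check that this is a skew brace. Writing the compatibility condition multiplicatively, it reads $a(bc)=(ab)a^{-1}(ac)$, and this holds by associativity. So $X$ is a skew brace whose additive group is $(G,\cdot)$, which has odd order and nilpotency class at most $2$ by hypothesis.

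Theorem~\ref{thm:baer-transform} then applies directly. The Baer addition $\oplus$ associated to the additive group of $X$ is exactly the Baer addition associated to $(G,\cdot)$, given in multiplicative notation by $x\oplus y=xy\,[y,x]^{1/2}$, as in Remark~\ref{rem:baer-formula}. Hence $\Br(X)=(G,\oplus,\cdot)$ is a brace, which is the claim.

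The only step needing care is translating the additive notation used throughout the paper into multiplicative notation. Once that is done, there is no genuine obstacle.

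Alternatively, one could check directly that each map $\mu_a(b)=a^{-1}\oplus ab$ is an automorphism of $(G,\oplus)$. However, the proof of Theorem~\ref{thm:baer-transform} already covers this case, since here $\lambda_a$ is the identity map and $\mu_a(b)=b\,[a,ab]^{1/2}$.
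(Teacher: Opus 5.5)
Your proposal is correct and is essentially the paper's own proof: view $(G,\cdot,\cdot)$ as the trivial skew brace and apply Theorem~\ref{thm:baer-transform}. Your explicit check that the trivial structure satisfies the compatibility condition by associativity is a detail the paper leaves implicit.
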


\begin{proof}
Consider the trivial skew brace $(G,\cdot,\cdot)$ and apply
Theorem~\ref{thm:baer-transform}.
\end{proof}

\begin{theorem}\label{thm:baer-trivial-nilpotent}
Let $(G,\cdot)$ be a nilpotent group of odd order with nilpotency class at most
$2$. Let $(G,\oplus,\cdot)$ be the brace obtained from $(G,\cdot)$ by Baer's
trick. Then $(G,\oplus,\cdot)$ is both left nilpotent and right nilpotent of
class at most $2$. More precisely,
\[
G*_{\oplus}G=[G,G]_\cdot
\]
and
\[
G*_{\oplus}(G*_{\oplus}G)=0,
\qquad
(G*_{\oplus}G)*_{\oplus}G=0.
\]
\end{theorem}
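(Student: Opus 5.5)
Write $x*_{\oplus}y=\ominus x\oplus (x\cdot y)\ominus y=-x\oplus xy\oplus(-y)$ for the star operation of the brace $(G,\oplus,\cdot)$; by Lemma~\ref{lem:baer} the $\oplus$-inverse of $x$ is $x^{-1}$. The plan is to compute $x*_{\oplus}y$ explicitly in terms of the group $(G,\cdot)$. Since $(G,\oplus)$ is abelian, we may write $x*_{\oplus}y=xy\oplus(x^{-1}\oplus y^{-1})$. Elements commuting in $(G,\cdot)$ satisfy $u\oplus v=uv$. Also $u\oplus v=uv\,[v,u]^{1/2}$, where $[v,u]^{1/2}$ is central. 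First $x^{-1}\oplus y^{-1}=x^{-1}y^{-1}[y^{-1},x^{-1}]^{1/2}=x^{-1}y^{-1}[y,x]^{1/2}$ by bilinearity of commutators in class $2$. Then $xy\oplus x^{-1}y^{-1}[y,x]^{1/2}$: the central factor can be pulled out, since $u\oplus vz=uvz[vz,u]^{1/2}=uvz[v,u]^{1/2}$ for central $z$. The commutator $[x^{-1}y^{-1},xy]$ equals $1$, because $x^{-1}y^{-1}=(yx)^{-1}$ and $yx=xy[x,y]^{\pm}$ differs from $xy$ by a central element, so $(yx)^{-1}$ commutes with $xy$. Hence
\[
x*_{\oplus}y=xy\,x^{-1}y^{-1}\,[y,x]^{1/2}=[x^{-1},y^{-1}]\,[y,x]^{1/2}.
\]
With $[x^{-1},y^{-1}]=[x,y]$ (bilinearity) and $[y,x]=[x,y]^{-1}$, this gives $x*_{\oplus}y=[x,y]^{1/2}$, up to the sign convention for commutators. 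I will fix the paper's convention carefully at this step; this bookkeeping is the main place where errors could creep in, though the exact sign is irrelevant for the conclusions.

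Next, $G*_{\oplus}G$ is the $\oplus$-subgroup generated by all $x*_{\oplus}y$. Every $[x,y]^{1/2}$ lies in $[G,G]_\cdot$: the square-root map $z\mapsto z^{1/2}$ on the odd-order abelian group $[G,G]_\cdot$ is a bijection of that subgroup, namely $z^{(m+1)/2}$ with $m$ the exponent. On $[G,G]_\cdot$, which is central, $\oplus$ coincides with $\cdot$, so it is an $\oplus$-subgroup. Hence $G*_{\oplus}G\le [G,G]_\cdot$. Conversely, each commutator $[x,y]$ equals $([x,y]^{1/2})^{1/2}$'s "double", i.e. $[x,y]=([x,y]^{1/2})^2$. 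Alternatively, $[x,y]^{1/2}=[x,y^{1/2}]$ with $y^{1/2}$ the square root in $(G,\cdot)$, since $y\mapsto[x,y]$ is a homomorphism and $(y^{1/2})^2=y$. Either way, every commutator $[x,y]$ is of the form $x*_{\oplus}y'$, and these generate $[G,G]_\cdot$ under $\cdot=\oplus$ on the centre. This gives equality.

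Finally, $x*_{\oplus}y=[x,y]^{1/2}$ is trivial whenever $x$ or $y$ is central. Since $G*_{\oplus}G=[G,G]_\cdot\le Z(G,\cdot)$, both $G*_{\oplus}(G*_{\oplus}G)$ and $(G*_{\oplus}G)*_{\oplus}G$ are generated by trivial elements and hence are $0$. By definition of left and right nilpotency series, both classes are at most $2$.
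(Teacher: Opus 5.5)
Your proof is correct and follows essentially the same route as the paper. Both compute $x*_{\oplus}y=\sqrt{[x,y]_\cdot}$ explicitly (the paper via $\mu_a(b)=b\sqrt{[a,b]_\cdot}$, you via $xy\oplus(x^{-1}\oplus y^{-1})$), then use the square-root bijection on $[G,G]_\cdot$ and the centrality of commutators to get $G*_{\oplus}G=[G,G]_\cdot$ and the vanishing of both second-level products. Your hedge about the sign convention is unnecessary: in class $2$ one has $xyx^{-1}y^{-1}=x^{-1}y^{-1}xy$, so your formula is exactly the paper's.
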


\begin{proof}
For $a,b\in G$, the Baer addition is
\[
a\oplus b=a\cdot b\cdot \sqrt{[b,a]_\cdot},
\]
The inverse of $a$ with respect to $\oplus$ is $a^{-1}$. Thus
\[
\mu_a(b)=\ominus a\oplus (a\cdot b)=a^{-1}\oplus (a\cdot b).
\]
Hence
\[
\mu_a(b)=a^{-1}\cdot a\cdot b\cdot \sqrt{[a\cdot b,a^{-1}]_\cdot}
       =b\cdot \sqrt{[a,b]_\cdot},
\]
where the last equality follows from the fact that $[G,G]_\cdot\leq Z(G,\cdot)$.

Since $z=\sqrt{[a,b]_{\cdot}}\in Z(G,\cdot)$, $b$ and $z$ commute in $(G,\cdot)$, and so by Remark \ref{rem:baer-formula},
\[
b\cdot z=b\oplus z.
\]
Therefore, we have
$\mu_a(b)=b\oplus z$.
It then follows that
\[
a*_{\oplus}b=\mu_a(b)\ominus b=(b\oplus z)\ominus b=z=\sqrt{[a,b]_\cdot}.
\]
Consequently,
\[
G*_{\oplus}G
=
\langle \sqrt{[a,b]_\cdot}\mid a,b\in G\rangle_\oplus.
\]
Since $|G|$ is odd, the square-root map is a bijection on $[G,G]_\cdot$.
Hence
\[
G*_{\oplus}G=[G,G]_\cdot.
\]

Now let $c\in [G,G]_\cdot$. Since $[G,G]_\cdot\leq Z(G,\cdot)$, for every
$a\in G$ we have
\[
[a,c]_\cdot=1
\qquad\text{and}\qquad
[c,a]_\cdot=1.
\]
Thus
\[
a*_{\oplus}c=\sqrt{[a,c]_\cdot}=1 \quad \text{and} \quad c*_{\oplus}a=\sqrt{[c,a]_\cdot}=1
\]
Since $1$ is the zero element of $(G,\oplus)$, this gives
\[
G*_{\oplus}(G*_{\oplus}G)=0 \quad  \text{and} \quad   (G*_{\oplus}G)*_{\oplus}G=0
\]
Therefore the brace is both left nilpotent and right nilpotent of class at
most $2$.
\end{proof}


\section{Basic properties of the Baer transform}

We first study the structural properties of the Baer transform. The main
point is that, although the additive group is changed, automorphisms and ideals
of the original skew brace are preserved.

\begin{lemma}\label{lem:automorphisms}
Let $X=(X,+,\cdot)$ and $\operatorname{Br}(X)=(X,\oplus,\cdot)$ be as in
Theorem~\ref{thm:baer-transform}. Then
\[
\operatorname{Aut}(X)\leq \operatorname{Aut}(\operatorname{Br}(X)).
\]
\end{lemma}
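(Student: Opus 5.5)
An automorphism $f$ of the skew brace $X$ is a bijection $f\colon X\to X$ that is simultaneously an automorphism of $(X,+)$ and of $(X,\cdot)$. To show $f\in\Aut(\Br(X))$, I check that $f$ is an automorphism of $(X,\oplus)$ and of $(X,\cdot)$. The multiplicative part is immediate, since $\Br(X)$ has the same multiplicative group $(X,\cdot)$ as $X$. So the lemma reduces to showing $f\in\Aut(X,\oplus)$.

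For this I invoke part (4) of Lemma~\ref{lem:baer}: every automorphism of $(X,+)$ is an automorphism of $(X,\oplus)$, where $\oplus$ is the Baer addition of $(X,+)$. Theorem~\ref{thm:baer-transform} constructs $\Br(X)$ from exactly this group, so the lemma applies directly. The containment $\Aut(X)\le\Aut(\Br(X))$ is then an inclusion of subsets of $\mathrm{Sym}(X)$. Both sets are groups under composition with the same operation, so it is a subgroup inclusion.

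For a self-contained verification of (4), I would use the formula of Remark~\ref{rem:baer-formula}. First, $f$ commutes with halving: if $y+y=z$, then $f(y)+f(y)=f(z)$, so $f(\tfrac12 z)=\tfrac12 f(z)$ by uniqueness of the square root in odd order. Second, $f$ preserves additive commutators, so $f([y,x]_+)=[f(y),f(x)]_+$. Together these give
\[
f(x\oplus y)=f(x)+f(y)+\tfrac12[f(y),f(x)]_+=f(x)\oplus f(y).
\]
No step is a genuine obstacle. The only point to state carefully is that the Baer addition depends only on the group $(X,+)$, so it is transported by any additive automorphism.
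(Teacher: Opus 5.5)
Your proposal is correct and follows the paper's own argument: an automorphism of $X$ preserves $+$, hence preserves $\oplus$ by Lemma~\ref{lem:baer}(4), and it preserves $\cdot$, which $\Br(X)$ shares with $X$. Your extra check of (4), using $f(\tfrac12 z)=\tfrac12 f(z)$ (from uniqueness of halves in odd order) and the fact that $f$ preserves additive commutators, is also correct; the paper simply cites the lemma at this point.
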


\begin{proof}
Let $\alpha\in \operatorname{Aut}(X)$. Then $\alpha$ is an automorphism of the
additive group $(X,+)$ and of the multiplicative group $(X,\cdot)$. By
Lemma~\ref{lem:baer}, every automorphism of $(X,+)$ is also an automorphism of
$(X,\oplus)$. Hence $\alpha$ preserves both $\oplus$ and $\cdot$. Therefore $\alpha\in \operatorname{Aut}(\operatorname{Br}(X)).$
\end{proof}

A key advantage of the Baer transform is that it preserves ideals.

\begin{proposition}\label{prop:ideals-preserved}
Let $X=(X,+,\cdot)$ and $\operatorname{Br}(X)=(X,\oplus,\cdot)$ be as in
Theorem~\ref{thm:baer-transform}. Then the following hold.
\begin{enumerate}
\item Every strong left ideal of $X$ is a strong left ideal of
$\operatorname{Br}(X)$.
\item Every ideal of $X$ is an ideal of $\operatorname{Br}(X)$. Moreover, if $I$ is a central ideal of $X$,
then $I$ is a central ideal of $\Br(X)$.
\end{enumerate}
\end{proposition}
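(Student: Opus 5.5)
We recall the definitions in the conventions of \cite{CV}. A strong left ideal $I$ is a normal subgroup of $(X,+)$ with $\lambda_a(I)\subseteq I$ for all $a\in X$. An ideal is a strong left ideal that is also normal in $(X,\cdot)$. A central ideal $I$ is one with $I\subseteq \mathrm{Ann}(X)=\ker\lambda\cap Z(X,+)\cap Z(X,\cdot)$, or equivalently one satisfying $a*x=x*a=0$ and $x\in Z(X,+)\cap Z(X,\cdot)$ for $x\in I$. In $\Br(X)$ the lambda map is $\mu_a(b)=\lambda_a(b)+\frac12[a,a\cdot b]_+$, as computed in the proof of Theorem~\ref{thm:baer-transform}. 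The plan is to check each defining condition for $\Br(X)$ directly, using three facts: this formula, the fact that $\frac12$ preserves subgroups, and the fact that $(X,\oplus)$ is abelian.

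\textbf{Step 1: normal subgroups of $(X,+)$ are subgroups of $(X,\oplus)$.} Let $I\trianglelefteq(X,+)$. Then $I$ has odd order, so $x\mapsto 2x$ is a bijection of $I$, and hence $\frac12 z\in I$ for every $z\in I$. For $x,y\in I$ the commutator $[y,x]_+$ lies in $I$, so $x\oplus y=x+y+\frac12[y,x]_+\in I$. Inverses and the identity coincide in the two groups by Lemma~\ref{lem:baer}. Thus $I$ is a subgroup of $(X,\oplus)$, and it is normal there automatically because $(X,\oplus)$ is abelian.

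\textbf{Step 2: invariance under $\mu_a$.} Suppose $\lambda_a(I)\subseteq I$ and $b\in I$. Then $\lambda_a(b)\in I$. Next, $[a,a\cdot b]_+=[a,a+\lambda_a(b)]_+=[a,\lambda_a(b)]_+$, by the class-two identities already used in the proof of Theorem~\ref{thm:baer-transform}. This commutator lies in $I$ by normality of $I$ in $(X,+)$, so its half lies in $I$ by Step 1. Hence $\mu_a(b)\in I$, and part (1) follows. For part (2), normality in $(X,\cdot)$ is unchanged, so ideals of $X$ are ideals of $\Br(X)$.

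\textbf{Step 3: central ideals.} Let $x\in I$. Since $x\in Z(X,+)$, Remark~\ref{rem:baer-formula} gives $x\oplus y=x+y$ for all $y$. Since $\lambda_a(x)=x$, we get $[a,a\cdot x]_+=[a,x]_+=0$, so $\mu_a(x)=x$; that is, $x\in\ker\mu$. Centrality in $(X,\oplus)$ is automatic because that group is abelian, and centrality in $(X,\cdot)$ is unchanged. So $I\subseteq\mathrm{Ann}(\Br(X))$.

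If the paper instead defines centrality by $I*X=X*I=0$ in the brace sense, we also need $x*_\oplus a=0$, that is, $\mu_x=\mathrm{id}$. This gives $\mu_x(b)=\lambda_x(b)+\frac12[x,x\cdot b]_+=b+0$, because $x$ is central in $(X,+)$ and $\lambda_x=\mathrm{id}$ (using $x\in\ker\lambda$). The main point requiring care is this bookkeeping for the central case under whichever definition is in force. In particular, since $\oplus$ and $+$ agree on pairs involving central elements, the star products $*$ and $*_\oplus$ coincide on such pairs.
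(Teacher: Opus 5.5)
Your proposal is correct and follows the paper's proof essentially step for step: the odd-order halving argument for closure of $I$ under $\oplus$, the identity $[a,a\cdot b]_+=[a,\lambda_a(b)]_+$ combined with normality of $(I,+)$ for $\mu_a$-invariance, and, for central ideals, the computation $\mu_x(b)=b+\frac{1}{2}[x,x\cdot b]_+=b$. There is one slip in Step 3: $\mu_a(x)=x$ for all $a$ says that $x$ is fixed by every $\mu_a$ (i.e.\ $a*_{\oplus}x=0$), not that $x\in\ker\mu$. Membership in $\ker\mu$ means $\mu_x=\mathrm{id}$, which is what your final paragraph (and the paper) verifies, so that paragraph is needed under the $\mathrm{Ann}$ definition as well, not only under the alternative one.
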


\begin{proof}
Let $I$ be a strong left ideal of $X$. Thus $(I,+)\unlhd (X,+)$,
$(I,\cdot)\leq (X,\cdot)$, and $\lambda_x(I)\subseteq I$ for every $x\in X$.

Let $a,b\in I$. We have $[b,a]_+\in I.$
As $I$ has odd order, $\frac{1}{2}z\in I$ for every $z\in I$.
Hence
$\frac{1}{2}[b,a]_+\in I.
$
Therefore
\[
a\oplus b=a+b+\frac{1}{2}[b,a]_+\in I.
\]
The inverse of $a$ with respect to $\oplus$ is $-a$, and this belongs to $I$.
Thus $(I,\oplus)\leq (X,\oplus)$. Since $(X,\oplus)$ is abelian, we also have
\[
(I,\oplus)\unlhd (X,\oplus).
\]

It remains to prove that $I$ is invariant under the lambda maps of
$\operatorname{Br}(X)$. Let $\mu_x$ denote the lambda map of
$\operatorname{Br}(X)$, so that
\[
\mu_x(y)=\ominus x\oplus (x\cdot y).
\]
From the proof of Theorem~\ref{thm:baer-transform}, for every $i\in I$, we have
\[
\mu_x(i)=\lambda_x(i)+\frac{1}{2}[x,x\cdot i]_+.
\]
Since
$x\cdot i=x+\lambda_x(i)$, we have
$[x,x\cdot i]_+=[x,\lambda_x(i)]_+.$
As $\lambda_x(i)\in I$ and $(I,+)\unlhd (X,+)$, it then follows that
\[
[x,x\cdot i]_+=[x,\lambda_x(i)]_+\in I,
\]
and hence also
\[
\frac{1}{2}[x,x\cdot i]_+\in I.
\]
Since $\lambda_x(i)\in I$, we get $\mu_x(i)\in I$. Therefore $I$ is a strong
left ideal of $\operatorname{Br}(X)$. This proves $(1)$.

Now let $I$ be an ideal of $X$. Then $I$ is a strong left
ideal of $X$ and $(I,\cdot)\unlhd (X,\cdot)$. By $(1)$, $I$ is a strong left
ideal of $\operatorname{Br}(X)$. Since $X$ and $\operatorname{Br}(X)$ have the
same multiplicative group, we still have
$
(I,\cdot)\unlhd (X,\cdot).
$ Therefore $I$ is an ideal of $\operatorname{Br}(X)$.

Finally suppose that $I$ is an ideal contained in the center of $X.$ Let $i\in I$ and $x\in X$. Then we have
\[
i\in Z(X,+),\qquad i\in Z(X,\cdot),\qquad \lambda_i=\operatorname{id}_X.
\]
First, since $i$ and $x$ commute in $(X,+)$,  by Remark \ref{rem:baer-formula}  we have
$i\oplus x=i+x=x+i=x\oplus i$. Thus $i$ is central in $(X,\oplus)$. Next, $i$ is central in $(X,\cdot)$ by assumption, and the multiplication of
$\Br(X)$ is the same as the multiplication of $X$. Therefore
$
i\cdot x=x\cdot i.
$

Finally, we check that $i$ lies in the kernel of the lambda map of $\Br(X)$.
Let $\mu$ denote the lambda map of $\Br(X)$. From the formula for $\mu$ we have
\[
\mu_i(x)=\lambda_i(x)+\frac{1}{2}[i,i\cdot x]_+.
\]
Since $\lambda_i=\operatorname{id}_X$, this becomes
\[
\mu_i(x)=x+\frac{1}{2}[i,i\cdot x]_+.
\]
As $i\in Z(X,+)$, we have
$
[i,i\cdot x]_+=0,
$
and this implies that
$
\mu_i(x)=x,
$
and hence $\mu_i=\operatorname{id}_X$.

Therefore every element of $I$ lies in $Z(\Br(X))$, completing the proof.
\end{proof}

\section{Properties which are invariant under the Baer Transform}

In this section, we explore the structural relationships between the skew braces $X$ and $\Br(X)$.

Let $X$ and $\Br(X)$ be as in Theorem \ref{thm:baer-transform}. Then, recall that for all $a, b\in X$ we have
\[
\mu_a(b)=\lambda_a(b)+\frac{1}{2}[a, \lambda_a(b)]_+
\]
and
\[
a*_{\oplus}b=\mu_a(b) \oplus (-b)= \lambda_a(b)-b+ \frac{1}{2}[a, \lambda_a(b)]_++\frac{1}{2}[-b, \lambda_a(b)]_+
\]

\vspace{5pt}
Since $X$ and $\Br(X)$ have the same multiplicative group and their additive groups are nilpotent, by \cite[Theorem 4.8]{CSV}, $X$ and $\Br(X)$ are left nilpotent if and only if the common multiplicative group is nilpotent. The next theorem gives an upper bound for the left nilpotency class of one in terms of that of the other.

\begin{theo}
\label{theo: left nilpotency}
Let $X$ be a skew brace of odd order such that the additive group $(X, +)$ is a nilpotent group of class at most $2$. Then we have the following:
\begin{enumerate}
    \item 
    If the left nilptoency class of $X$ is $r$, then the left nilpotency class of $\Br(X)$ is at most $2r$.
    \item 
    If the left nilptoency class of $\Br(X)$ is $n$, then the left nilpotency class of $X$ is at most $2n$.
\end{enumerate}
\end{theo}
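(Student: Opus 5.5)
The plan is to compare the two left series directly, using the explicit formula for $*_{\oplus}$ in terms of $*$ and central additive commutators. Write $X^1=X$, $X^{k+1}=X*X^k$ for the left series of $X$, and $X_{\oplus}^1=X$, $X_{\oplus}^{k+1}=X*_{\oplus}X_{\oplus}^k$ for that of $\Br(X)$.

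\textbf{The basic formula.} Starting from the displayed expression for $a*_{\oplus}b$ and substituting $\lambda_a(b)=a*b+b$, the class-two identities in $(X,+)$ give
\[
a*_{\oplus}b=a*b+\tfrac12[a,a*b]_+ +\tfrac12[a,b]_+ -\tfrac12[b,a*b]_+ .
\]
The three commutator terms are central in $(X,+)$. Hence additive sums of them coincide with their $\oplus$-sums, and $\oplus$-sums of elements of $X^{k}$ with central elements stay inside the corresponding $+$-subgroups. The terms $a*b$, $[a,a*b]_+$ and $[b,a*b]_+$ are harmless once $b\in X^k$. Indeed, $a*b\in X^{k+1}$, and the two commutators lie in $X^{k+1}$ provided $X^{k+1}$ is normal in $(X,+)$. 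In any case they lie in $[X,X^{k+1}]_+$.

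\textbf{The main obstacle.} The only genuinely troublesome term is $\tfrac12[a,b]_+$ with $b\in X^k$. It need not lie in $X^{k+1}$; it only lies in $[X,X^k]_+$. I would therefore work with the auxiliary additive subgroups
\[
Y_k=X^k+[X,X^{k-1}]_+ ,
\]
with the convention $X^0=X$. Each $Y_k$ consists of $X^k$ plus a central piece, so halving is available inside it.

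\textbf{The inductive claims.} The key step is to prove, by induction on $k$,
\[
X_{\oplus}^{2k-1}\subseteq Y_k \quad\text{and}\quad X_{\oplus}^{2k}\subseteq X^k+[X,X^{k}]_+ .
\]
\begin{enumerate}
\item In the step from $2k-1$ to $2k$, an element $b=x+z$ with $x\in X^k$ and $z$ a central commutator is handled as follows. For central $z$ we have $a*_{\oplus}z=a*z+\tfrac12[a,a*z]_+$, and one must check that $a*z$ lands in the right place. This uses that $[X,X^{k-1}]_+\subseteq X^{k}$ modulo deeper commutators; I would try to verify this first via the left-ideal property and $\lambda$-invariance of the $X^j$.
\item In the step from $2k$ to $2k+1$, elements of $[X,X^k]_+$ are central, so $*_{\oplus}$ applied to them produces $a*c$ terms lying in $X^{k+1}$. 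These are exactly what the second application of $*_{\oplus}$ absorbs.
\end{enumerate}
Granting the claims, $X^{r+1}=0$ gives $X_{\oplus}^{2r+1}\subseteq Y_{r+1}=0$. The last equality uses $[X,X^{r}]_+\subseteq X^{r}$, and with the central-term bookkeeping this yields left class at most $2r$, which is (1).

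\textbf{The converse.} For (2) the argument is symmetric. The inverse transformation satisfies $a+b=a\oplus b\ominus\tfrac12[b,a]_+$, and $[b,a]_+=[b,a]_{\oplus\text{-twisted}}$ is expressible through $\oplus$ and the same central subgroup. So $a*b$ equals $a*_{\oplus}b$ corrected by central terms of the same shape, and the identical induction gives $X^{2k-1}\subseteq X_{\oplus}^{k}+(\text{central commutator part})$.

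I expect the main difficulty to be controlling the stray commutator $[a,b]_+$, $b\in X^{k}$, precisely enough that it only costs one extra step, which is the source of the factor $2$.
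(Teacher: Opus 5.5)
Your argument has a genuine gap in the final step, and the inductive claims cannot yield the bound even if granted. From $X^{r+1}=0$ your first claim gives only $X_{\oplus}^{2r+1}\subseteq Y_{r+1}=[X,X^{r}]_+$. The inclusion $[X,X^r]_+\subseteq X^r$ does not make this zero: $X^{r+1}=0$ says every $\lambda_a$ fixes $X^r$ pointwise, not that $X^r$ is central in $(X,+)$. Take the trivial skew brace $(G,+,+)$ on a nonabelian group of order $p^3$ and exponent $p$. There $r=1$ but $Y_2=[G,G]_+\neq 0$. The bound itself holds in this example ($X_\oplus^3=0$ by Theorem~\ref{thm:baer-trivial-nilpotent}), but your claims cannot detect it.

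Your second claim at $k=r$ does not help either. One further step still produces $a*_{\oplus}x=\tfrac12[a,x]_+$ for $x\in X^r$. Killing $[X,X^r]_+$ with one more step would need something like $X^r\unlhd (X,+)$, which you have not established, and it would give $2r+1$, not $2r$. The auxiliary fact you plan to verify in the inductive step, ``$[X,X^{k-1}]_+\subseteq X^k$ modulo deeper commutators'', is false already for $k=2$ in the same example, since $[G,G]_+\not\subseteq X^2+[X,X^2]_+=0$. What that step actually needs is control of $X*[X,X^{k-1}]_+$. Your part (2), described as the identical induction, inherits all of this.

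The missing idea is that you need not track how deep the stray commutators sit. Instead, compare the two left series modulo the single subgroup $I=\gamma_2(X,+)$. This subgroup is central and $\lambda$-invariant, so $X*I\subseteq I$, and $\oplus$ agrees with $+$ on it.

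\begin{enumerate}
\item All correction terms in your basic formula lie in $I$, and $x\oplus y\equiv x+y \pmod I$. Induction then gives $X_\oplus^{n}+I=X^n+I$ for all $n\geq 2$. So the two series agree modulo $I$ step for step, not at half speed.
\item For $z\in I$, both $z$ and $\lambda_a(z)$ are central. Hence $a*_{\oplus}z=a*z$ exactly; your extra term $\tfrac12[a,a*z]_+$ vanishes.
\item Consequently $X^{r+1}=0$ gives $X_\oplus^{r+1}\subseteq I$. Then $r$ further $*_{\oplus}$-steps on $I$ coincide with $r$ $*$-steps on $I\subseteq X$, which land in $X^{r+1}=0$. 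So $X_\oplus^{2r+1}=0$.
\end{enumerate}

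The factor $2$ is $r$ steps to enter $I$ plus $r$ steps to exhaust it. Part (2) is the same argument with the roles of $X$ and $\Br(X)$ exchanged, and no inverse Baer formula is needed.
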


\begin{proof}
Put $I=\gamma_2(X,+)$. First of all, by induction on $n$, we will prove that 
\begin{equation*}
\Br(X)^{n}\equiv X^{n} \pmod{I}
\end{equation*}
for all $n\geq 2$.
We have
\[
X*X=\langle  \lambda_a(b)-b \mid a, b \in X \rangle_{+} \]
and 
\begin{equation}
	\label{eqn:star product modulo gamma}
	\begin{split}
\Br(X)*_{\oplus}\Br(X) &= \langle \mu_a(b) \oplus (-b) \mid a ,b \in X \rangle_{\oplus}\\
&= \langle  \lambda_a(b)-b+ \frac{1}{2}[a, \lambda_a(b)]_+ +\frac{1}{2}[-b, \lambda_a(b)]_+  \mid a ,b \in X\rangle_{\oplus}\\
&\equiv \langle  \lambda_a(b)-b   \mid a ,b \in X \rangle_+ \pmod{I}\\
& \equiv X*X \pmod{I}
	\end{split}
\end{equation}

Now suppose that for some $n\geq 3$,
\begin{equation*}
\Br(X)^{n-1}\equiv X^{n-1} \pmod{I}
\end{equation*}
 Let $a\in \Br(X)$ and $b \in \Br(X)^{n-1}$. Then by the assumption, we have 
\[
b=c+z
\]
for some $c\in X^{n-1}$ and $z\in I$. Then taking into account that $I$ is lambda-invariant and central, we get 
\[
\begin{aligned}
a*_{\oplus} b&= a*_{\oplus}(c+z)\\
&= \lambda_a(c+z)-(c+z)+ \frac{1}{2}[a, \lambda_a(c+z)]_++\frac{1}{2}[-(c+z), \lambda_a(c+z)]_+\\
&\equiv \lambda_a(c)-c \pmod{I}\\
&\equiv a*c \pmod{I}
\end{aligned}
\]
Thus, for all $n\geq 2$, we have
\begin{equation}
	\label{eqn:higher product congruence}
\Br(X)^n\equiv X^n \pmod{I}
\end{equation}

We will next observe that $*$ operation on $I$ in $X$ coincides with that in $\Br(X)$.
Notice that for any $z\in I$ and $a\in X$, we have $a*z=\lambda_a(z)-z$ in $X$, and in $\Br(X)$
\begin{equation*}
\begin{split}
a*_{\oplus}z=\mu_a(z)\oplus (-z)
= \lambda_a(z)-z+\frac{1}{2}[a, \lambda_a(z)]_++\frac{1}{2}[-z, \ \lambda_a(z)]_+
	\end{split}
\end{equation*}
Since $I$ is lambda-invariant and central, in the above equation all half-terms vanish. Thus,
\begin{equation}
\label{eqn: star op on gamma}
a*_{\oplus}z= \lambda_a(z)-z=a*z.
\end{equation}

Next assume that the left nilpotency class of $X$ is $r$. Then,  we have $X^{r+1}=0$ and by (\ref{eqn:higher product congruence}), we get $\Br(X)^{r+1}\subseteq I$. Since $I \subseteq X$, we have
\[
\underbrace{X*(X*\cdots *(X*(X}_{r \ \text{copies}}*I))\cdots )=0.
\] Then by equation (\ref{eqn: star op on gamma}), 
\[
\underbrace{\Br(X)*_{\oplus} \big( \Br(X)*_{\oplus}\cdots *_{\oplus}(\Br(X)*_{\oplus}(\Br(X)}_{r \ \text{copies}}*_{\oplus}I))\cdots \big)=0
\]
and hence $\Br(X)^{2r+1}=0$, which proves (1).

Now suppose that the left nilpotency class of $\Br(X)$ is $n$, that is $\Br(X)^{n+1}=0$. This implies that $X^{n+1} \subseteq I$, by (\ref{eqn:higher product congruence}). Since $I\subseteq \Br(X)$, by taking into account (\ref{eqn: star op on gamma}), we can conclude that
 \[
\underbrace{X*(X*\cdots *(X*(X}_{n \ \text{copies}}*I))\cdots )=0.
\] 
where $m$ is as above. Hence, $X^{2n+1}=0$, which yields (2).


\end{proof}

\begin{pro}
\label{pro:baer quotient}
Let $X$ be a skew brace of odd order such that the additive group $(X, +)$ is a nilpotent group of class at most $2$. Let $I$ be an ideal $X$. Then 
\[
\Br(X/I)=\Br(X)/I.
\]
\end{pro}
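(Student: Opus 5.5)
First we check that both sides make sense. By Proposition~\ref{prop:ideals-preserved}, $I$ is an ideal of $\Br(X)$, so the quotient brace $\Br(X)/I$ is defined. Its underlying set is the coset set $X/I$, and cosets of $(I,\oplus)$ coincide with cosets of $(I,+)$. Indeed, for $a\in X$ and $i\in I$ we have $a\oplus i=a+i+\frac{1}{2}[i,a]_+$, and $\frac{1}{2}[i,a]_+\in I$ because $(I,+)\unlhd(X,+)$ and $|I|$ is odd. Hence $a\oplus I\subseteq a+I$, and the two cosets are equal since both have $|I|$ elements. On the other side, $(X/I,+)$ is a quotient of $(X,+)$, so it has odd order and class at most $2$, and $\Br(X/I)$ is defined by Theorem~\ref{thm:baer-transform}. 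Both structures therefore live on the same set $X/I$. Their multiplications agree, since each is the one induced from $(X,\cdot)$ on the cosets of $(I,\cdot)$, and these are the same cosets because $a\cdot I=a+\lambda_a(I)=a+I$.

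It remains to compare the additions. In $\Br(X)/I$ the sum is $(a+I)\oplus(b+I)=(a\oplus b)+I=\bigl(a+b+\frac{1}{2}[b,a]_+\bigr)+I$. In $\Br(X/I)$ it is the Baer addition of the group $\bar G=(X/I,+)$:
\[
\bar a\,\bar\oplus\,\bar b=\bar a+\bar b+\tfrac{1}{2}[\bar b,\bar a]_+ .
\]
We have $[\bar b,\bar a]_+=\overline{[b,a]_+}$ because the projection $\pi:(X,+)\to(X/I,+)$ is a homomorphism. The key step is the identity $\frac{1}{2}\overline{z}=\overline{\frac{1}{2}z}$. To prove it, set $y=\frac{1}{2}z$, so $y+y=z$. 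Then $\bar y+\bar y=\bar z$, and by uniqueness of halves in the odd-order group $\bar G$ we get $\frac{1}{2}\bar z=\bar y$. So halving commutes with any group homomorphism between odd-order groups. Substituting gives $\bar a\,\bar\oplus\,\bar b=\pi(a\oplus b)$, so the two additions agree.

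The two braces therefore have the same underlying set, the same addition and the same multiplication, and $\Br(X/I)=\Br(X)/I$. The only point that needs care is the identification of cosets in the first paragraph: the quotient by $I$ in $\Br(X)$ is formed using $\oplus$, and we had to check that it gives the same partition as the quotient using $+$. That check is handled by the centrality-free argument above, which uses only normality of $I$ and the odd order of $I$. Everything else is a formal consequence of $\pi$ being a homomorphism.
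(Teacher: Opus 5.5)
Your proof is correct and follows essentially the same route as the paper: you identify the cosets $x\oplus I=x\cdot I=x+I$ so that both braces live on $X/I$, then check that both additions reduce to $\bigl(x+y+\frac{1}{2}[y,x]_+\bigr)+I$ and both multiplications to $x\cdot y+I$. Your cardinality argument for the coset identification and your uniqueness-of-halves argument for $\frac{1}{2}\bar z=\overline{\frac{1}{2}z}$ justify steps that the paper only asserts (it simply writes $\frac{1}{2}([y,x]_++I)=\frac{1}{2}[y,x]_++\frac{1}{2}I$).
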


\begin{proof}
First of all, notice that since we have  $x\oplus I= x\cdot I=x+I$ for any $x\in X$, the underlying sets of $\Br(X/I)$ and $\Br(X)/I$ are the same.

Note that the additive group of $X/I$ has nilpotency class at most $2$. Thus we can consider the brace $\Br(X/I)=(X/I, \oplus, \cdot)$. Now for any $x+I, y+I \in X/I$, we have
\begin{equation*}
\begin{split}
	(x+I) \oplus (y+I)=&(x+I) + (y+I) + \frac{1}{2}[y+I, x+I]_+\\
	=& (x+y)+I+\frac{1}{2}([y,x]_++I)\\
	=& (x+y)+I+ \frac{1}{2}[y,x]_++\frac{1}{2}I\\
	=& (x+y+\frac{1}{2}[y,x]_+)+ I
\end{split}
	\end{equation*}
	
We next consider the quotient brace $\Br(X)/I$. Now pick $x\oplus I, y\oplus I \in \Br(X)/I$. Then by taking into account that $\gamma_2(X,+)$ is central, we have
\begin{equation*}
	\begin{split}
		(x\oplus I) \oplus (y\oplus I)=&(x\oplus y) \oplus I \\
		=& (x+y+\frac{1}{2}[y,x]_+) \oplus I\\
		=& (x+y+\frac{1}{2}[y,x]_+)+ I + \frac{1}{2}[I,\ x+y+\frac{1}{2}[y,x]_+]_+\\
		=&(x+y+\frac{1}{2}[y,x]_+)+ I
	\end{split}
\end{equation*}

Thus, $\Br(X/I)$ and $\Br(X)/I$ agree on addition.

Also for  $x\oplus I, y\oplus I \in \Br(X)/I$, we have
\[
(x\oplus I)\cdot (y\oplus I)=(x\cdot y) \oplus I= x\cdot y +I +\frac{1}{2}[I, x\cdot y]_+ =x\cdot y +I
\]
which is equal to $(x+I) \cdot (y+I)$ in $\Br(X/I)$.
\end{proof}

\begin{theo}
\label{thm:right nilpotency}
Let $X$ be a skew brace of odd order such that the additive group $(X, +)$ is a nilpotent group of class at most $2$. If $X$ is right nilpotent, then so is $\Br(X)$.
\end{theo}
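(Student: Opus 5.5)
The plan is to characterise right nilpotency through a series of ideals rather than through the right series directly, and then transport such a series to $\Br(X)$ using Proposition~\ref{prop:ideals-preserved}. Recall that the right series $X^{(1)}=X$, $X^{(n+1)}=X^{(n)}*X$ consists of ideals of $X$. Consequently $X$ is right nilpotent if and only if there is a chain of ideals
\[
0=I_0\subseteq I_1\subseteq\cdots\subseteq I_k=X
\]
with $I_{j+1}*X\subseteq I_j$ for all $j$. One direction takes $I_j=X^{(k+1-j)}$; the other is an induction showing $X^{(n)}\subseteq I_{k+1-n}$. The same characterisation applies to $\Br(X)$. By Proposition~\ref{prop:ideals-preserved}, every $I_j$ is already an ideal of $\Br(X)$. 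So it suffices to refine the chain into a chain of ideals of $X$ (hence of $\Br(X)$) whose consecutive quotients are annihilated on the right by $*_{\oplus}$.

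The key computation is the formula recalled at the beginning of this section. For $a\in I_{j+1}$ and $b\in X$, write $\lambda_a(b)=b+(a*b)$. Then
\[
a*_{\oplus}b=(a*b)+\tfrac12[a,b]_+ +\tfrac12[a,a*b]_+ +\tfrac12[-b,a*b]_+ ,
\]
using bilinearity of commutators in class $2$ and the fact that halves of central elements add. The terms $a*b$, $[a,a*b]_+$ and $[-b,a*b]_+$ all lie in $I_j$, because $a*b\in I_j$ and $I_j$ is additively normal; their halves then lie in $I_j$ since $|I_j|$ is odd. The only term not in $I_j$ is $\tfrac12[a,b]_+$, which lies in $[I_{j+1},X]_+$. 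This is the defect to be absorbed.

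To absorb it, I would insert between $I_j$ and $I_{j+1}$ the set
\[
J_j=I_j+[I_{j+1},X]_+ .
\]
The main obstacle is verifying that $J_j$ is an ideal of $X$. The facts to check are as follows.
\begin{itemize}
\item $J_j$ is an additive subgroup, normal in $(X,+)$, since $[I_{j+1},X]_+$ is central.
\item $J_j$ is $\lambda$-invariant, because $\lambda_x$ is an additive automorphism preserving $I_{j+1}$ and therefore maps $[i,y]_+$ to $[\lambda_x(i),\lambda_x(y)]_+$.
\item $J_j$ is multiplicatively normal. Here I would use $x\cdot z\cdot x^{-1}=x+\lambda_x(z\cdot x^{-1})$ together with centrality of $z$. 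Alternatively, I would work in the quotient $X/I_j$ (legitimate by Proposition~\ref{pro:baer quotient}), where $[I_{j+1},X]_+$ becomes a central, $\lambda$-invariant subgroup of $I_{j+1}/I_j$. In that quotient the right action of $X$ is trivial, so $x\cdot i=x+\lambda_x(i)$ should force invariance under conjugation.
\end{itemize}

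Once $J_j$ is an ideal, the remaining conditions follow from the displayed formula. For $a\in I_{j+1}$, every term of $a*_{\oplus}b$ lies in $J_j$, so $I_{j+1}*_{\oplus}X\subseteq J_j$. For $a\in J_j$, write $a=i+z$ with $i\in I_j$ and $z$ central. Then $\tfrac12[a,b]_+=\tfrac12[i,b]_+\in I_j$, and since $z$ is central in $(X,+)$ and lies in $I_{j+1}$, the $z$-contribution $z*b$ lies in $I_j$. Hence $J_j*_{\oplus}X\subseteq I_j$. The refined chain therefore has length at most $2k$, giving right nilpotency of $\Br(X)$ with class at most twice that of $X$, in parallel with Theorem~\ref{theo: left nilpotency}.

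If the normality of $J_j$ fails in general, the fallback is to mimic the proof of Theorem~\ref{theo: left nilpotency} directly. One shows $\Br(X)^{(n)}\equiv X^{(n)}\pmod{\gamma_2(X,+)}$, so that $\Br(X)^{(m)}\subseteq\gamma_2(X,+)$ when $X^{(m)}=0$. One then kills $\gamma_2(X,+)$ by iterating the formula above for $z\in\gamma_2(X,+)$, namely $z*_{\oplus}b=z*b+\tfrac12[-b,z*b]_+$, and uses right nilpotency of $X$ to terminate.
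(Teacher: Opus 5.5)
Your proof is correct, but it is organised differently from the paper's.

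\textbf{Comparison of routes.} The paper inducts on $|X|$. It peels off the last nonzero term $I=X^{(n)}$ of the right series, which lies in $\ker\lambda$. It then uses $\Br(X)/I=\Br(X/I)$ (Proposition~\ref{pro:baer quotient}) to get $\Br(X)^{(m)}\subseteq I$. Finally it uses the exact identity $i*_{\oplus}x=\tfrac12[i,x]_+\in I\cap\gamma_2(X,+)$, together with the fact that $*_{\oplus}$ annihilates $I\cap\gamma_2(X,+)$ on the right, so two more steps suffice.

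You split every layer into two at all levels simultaneously, working modulo $I_j$ rather than with $\lambda_i=\mathrm{id}$. The interleaved chain $I_j\subseteq I_j+[I_{j+1},X]_+\subseteq I_{j+1}$ is then an explicit right-annihilating series for $\Br(X)$. This buys you two things: you need neither the induction nor the quotient compatibility, and the bound ($\Br(X)^{(2k+1)}=0$ whenever $X^{(k+1)}=0$) is explicit. The paper obtains the same bound only if one tracks $m$ through the induction. In exchange, the paper computes only in the bottom layer where $\lambda$ is trivial, so it needs no congruences modulo $I_j$.

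\textbf{The step you flag as the main obstacle is not needed.} Deducing $\Br(X)^{(n)}\subseteq K_{m+1-n}$ from $K_{l+1}*_{\oplus}X\subseteq K_l$ uses only that each $K_l$ is closed under $\oplus$. Every subgroup $H$ of $(X,+)$ has this property, because $H$ has odd order and so $\tfrac12[v,u]_+\in H$ for $u,v\in H$.

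Your claim is true anyway, and your ``should force'' can be made precise. In $X/I_j$, every $z$ in the image of $J_j$ satisfies $\lambda_z=\mathrm{id}$, so $z\cdot x^{-1}=z+x^{-1}$. Hence
\[
x\cdot z\cdot x^{-1}=x+\lambda_x(z+x^{-1})=x+\lambda_x(z)-x ,
\]
which stays in that additively normal, $\lambda$-invariant image.

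\textbf{Minor points.}
\begin{itemize}
\item With the convention $a*b=\lambda_a(b)-b$ one has $\lambda_a(b)=(a*b)+b$, not $b+(a*b)$; the former is what your displayed formula actually uses.
\item For $a\in J_j$ you do not need to split $a=i+z$ to get $a*b\in I_j$, since already $J_j\subseteq I_{j+1}$.
\item Your fallback is unnecessary. As stated, it would need $\gamma_2(X,+)*X\subseteq\gamma_2(X,+)$, in order to replace $(c+z)*b$ by $c*b$ modulo $\gamma_2(X,+)$ when the \emph{first} argument varies. That is not available, since $\gamma_2(X,+)$ is only known to be a strong left ideal.
\end{itemize}
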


\begin{proof}
Assume that $X$ is right nilpotent. We proceed by induction on $|X|.$
Let $n$ be the smallest integer such that $X^{(n+1)}=0$. Let $I= X^{(n)}$. Then $I\neq 0$ is an ideal of $X$ such that $I \subseteq \ker{\lambda}$. Notice that $X/I$ is also right nilpotent. Then by induction, $\Br(X/I)$ is right nilpotent. By Proposition \ref{pro:baer quotient}, we have $\Br(X/I)=\Br(X)/I$. Thus, $\Br(X)/I$ is right nilpotent, and so $\Br(X)^{(m)} \subseteq I$ for some $m$.

We will next prove that $(I*_{\oplus} \Br(X))*_{\oplus} \Br(X)=0$. By using the fact that $I \subseteq \ker{\lambda}$, we have
\[
\begin{aligned}
I*_{\oplus}\Br(X)&=\langle i *_{\oplus} x \mid i\in I, x\in X \rangle_{\oplus}\\
&= \langle  \lambda_i(x)-x+\frac{1}{2}[i, \lambda_i(x)]_++ \frac{1}{2}[-x, \lambda_i(x)]_+\rangle
&=\langle  \frac{1}{2}[i,x]_+ \mid i \in I, x\in X\rangle_+
\end{aligned}
\]
Then as $I$ is an ideal, we have  $I*_{\oplus}\Br(X) \in I \cap \gamma_2(X,+)$.
Now for any $z\in I\cap \gamma_2(X,+)$ and $y\in \Br(X)$, we have
\[
z*_{\oplus}y= \lambda_z(y)-y+ \frac{1}{2}[z, \lambda_z(y)]_++\frac{1}{2}[-y, \lambda_z(y)]_+=0
\]
since $z \in \ker{\lambda}$ and it is a central element.
It then follows that $(I *_{\oplus} \Br(X))*_{\oplus} \Br(X)=0$, and so $\Br(X)^{(m+2)}=0$.	
\end{proof}

The converse of Theorem \ref{thm:right nilpotency} is not true. 

\begin{exa}
	$X=\SmallSkewbrace(27,47)$ is not right nilpotent but $\Br(X)$ is right nilpotent.
\end{exa}

The following general result for two-sided skew braces will be used in Theorem \ref{theo:two-sided}.

\begin{lemma}
\label{lem:right-distributive-rho}
Let $X$ be a skew brace. Then $X$ is two-sided if and only
if, for every $c\in X$, the map
\[
\rho_c:(X,+)\longrightarrow (X,+),
\qquad
\rho_c(x)=x\cdot c-c
\]
is an automorphism of the additive group $(X,+)$.
\end{lemma}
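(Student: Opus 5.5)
The plan is to start from the definition of a two-sided skew brace: $X$ is two-sided if, for all $a,b,c\in X$,
\[
(a+b)\cdot c=a\cdot c-c+b\cdot c .
\]
Both implications then reduce to rewriting this identity in terms of $\rho_c$. No earlier result of the paper is needed; the argument is purely group-theoretic inside $(X,+)$.

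First I will check that each $\rho_c$ is a bijection of $X$, which holds in every skew brace, two-sided or not. Right multiplication $x\mapsto x\cdot c$ is a bijection of the set $X$, and so is right translation by $-c$ in $(X,+)$. Hence $\rho_c(x)=x\cdot c-c$ is a composition of two bijections. Since $X$ is finite, this already gives enough: $\rho_c$ is an automorphism of $(X,+)$ exactly when it is additive. (Even without finiteness, bijectivity plus additivity suffices.)

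Next I will compute both sides of the additivity condition directly. On one side,
\[
\rho_c(a+b)=(a+b)\cdot c-c .
\]
On the other side,
\[
\rho_c(a)+\rho_c(b)=a\cdot c-c+b\cdot c-c .
\]
Now add $c$ on the right of both expressions, using that $(X,+)$ is a group. This shows that $\rho_c(a+b)=\rho_c(a)+\rho_c(b)$ is equivalent to
\[
(a+b)\cdot c=a\cdot c-c+b\cdot c .
\]
That is precisely the two-sided identity for the triple $a,b,c$.

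Quantifying over all $a,b,c$ gives the stated equivalence. If $X$ is two-sided, every $\rho_c$ is an additive bijection, hence an automorphism of $(X,+)$. Conversely, if every $\rho_c$ is additive, the two-sided identity holds for all triples. I do not expect any real obstacle. The only point needing care is the order of terms in the additive group, since $(X,+)$ may be non-abelian. The cancellation of $-c$ must therefore be done strictly on the right, which is exactly how both expressions are arranged.
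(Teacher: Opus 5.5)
Your proposal is correct and follows essentially the same route as the paper: $\rho_c$ is always a bijection, being the composition of right multiplication by $c$ with right additive translation by $-c$, and additivity of $\rho_c$ is equivalent to the two-sided identity by adding $c$ on the right. You phrase it as a single biconditional for each triple $a,b,c$ instead of two separate directions, but the content is the same.
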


\begin{proof}
Assume first that $X$ is two-sided. Then, for all $x,y,c\in X$, we have
\[
(x+y)\cdot c=x\cdot c-c+y\cdot c.
\]
Hence $$\rho_c(x+y)
=(x+y)\cdot c-c
=x\cdot c-c+y\cdot c-c=\rho_c(x)+\rho_c(y).$$
Thus $\rho_c$ is an endomorphism of $(X,+)$. Moreover, $\rho_c$ is bijective,
since it is the composition of the bijection $x\mapsto x\cdot c$
with the additive translation
$
z\mapsto z-c.
$
Therefore $\rho_c\in\Aut(X,+)$.

Conversely, assume that, for every $c\in X$, the map
$
\rho_c(x)=x\cdot c-c
$
is an automorphism of $(X,+)$. Then, for all $x,y,c\in X$, we have
$
\rho_c(x+y)=\rho_c(x)+\rho_c(y).$
That is,
\[
(x+y)\cdot c-c=(x\cdot c-c)+(y\cdot c-c).
\]
Adding $c$ on the right in the additive group gives
\[
(x+y)\cdot c=x\cdot c-c+y\cdot c.
\]
Hence $X$ is two-sided.
\end{proof}

\begin{theo}
\label{theo:two-sided}
Let $X$ be a two-sided skew brace of odd order such
that the additive group $(X,+)$ has nilpotency class at most $2$. Then
$
\Br(X)=(X,\oplus,\cdot)
$
is a two-sided brace.
\end{theo}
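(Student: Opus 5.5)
We will apply Lemma~\ref{lem:right-distributive-rho} to the brace $\Br(X)=(X,\oplus,\cdot)$. Its right-hand maps are
\[
\rho'_c(x)=x\cdot c\ominus c=x\cdot c\oplus(-c), \qquad c\in X.
\]
Here we use Lemma~\ref{lem:baer}, which says the $\oplus$-inverse of $c$ is $-c$. So it suffices to show that each $\rho'_c$ is an automorphism of $(X,\oplus)$. As in the proof of Theorem~\ref{thm:baer-transform}, we first prove that $\rho'_c$ is an automorphism of $(X,+)$, and then invoke part (4) of Lemma~\ref{lem:baer} to conclude that it is also an automorphism of $(X,\oplus)$.

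The first step is to express $\rho'_c$ through the map $\rho_c(x)=x\cdot c-c$ of $X$. By Remark~\ref{rem:baer-formula},
\[
\rho'_c(x)=x\cdot c-c+\tfrac12[-c,\,x\cdot c]_+ .
\]
In a group of class at most $2$ the commutator is bimultiplicative and antisymmetric, and all commutators are central. Hence
\[
[-c,x\cdot c]_+=[x\cdot c,c]_+=[x\cdot c-c,\,c]_+=[\rho_c(x),c]_+ .
\]
It follows that
\[
\rho'_c(x)=\rho_c(x)+\tfrac12[\rho_c(x),c]_+ .
\]
The sign conventions for the commutator $[\,\cdot,\cdot\,]_+$ need care here. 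This is the step I expect to need the most attention, although it is only bookkeeping with the class-two identities.

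Next we show that $\rho'_c$ is an endomorphism of $(X,+)$. Since $X$ is two-sided, Lemma~\ref{lem:right-distributive-rho} gives $\rho_c\in\Aut(X,+)$. In class $2$ we have
\[
[u+v,c]_+=[u,c]_++[v,c]_+ .
\]
Halving is additive on the central subgroup $\gamma_2(X,+)$ by Remark~\ref{rem:baer-formula}, and the half-commutator terms are central, so they can be moved freely. Together these give $\rho'_c(x+y)=\rho'_c(x)+\rho'_c(y)$, by exactly the computation used for $\mu_a$ in Theorem~\ref{thm:baer-transform}.

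Finally we show that $\rho'_c$ is injective. Suppose $\rho'_c(x)=0$. Then $\rho_c(x)=-\tfrac12[\rho_c(x),c]_+$ lies in $\gamma_2(X,+)\le Z(X,+)$. Hence $[\rho_c(x),c]_+=0$, so $\rho_c(x)=0$, and therefore $x=0$. By finiteness, $\rho'_c\in\Aut(X,+)$, and so $\rho'_c\in\Aut(X,\oplus)$ by Lemma~\ref{lem:baer}. Lemma~\ref{lem:right-distributive-rho}, applied to $\Br(X)$, now shows that $\Br(X)$ is two-sided.
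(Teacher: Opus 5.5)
Your proposal is correct and follows essentially the same route as the paper. Both arguments write $x\mapsto x\cdot c\ominus c$ as $\rho_c(x)$ plus a central half-commutator correction, prove it is an endomorphism of $(X,+)$ with trivial kernel, and then pass to $\Aut(X,\oplus)$ via Lemma~\ref{lem:baer}(4) and Lemma~\ref{lem:right-distributive-rho}. Your correction term $\tfrac12[\rho_c(x),c]_+$ agrees with the paper's $\tfrac12[-c,\rho_c(x)]_+$ by the class-two commutator identities.
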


\begin{proof} Since $X$ is two-sided, we have
\[
(x+y)\cdot c=x\cdot c-c+y\cdot c
\]
for all $x,y,c\in X$. Equivalently by Lemma \ref{lem:right-distributive-rho}, for a fixed $c\in X$, the map
\[
\rho_c:(X,+)\longrightarrow (X,+),
\qquad
\rho_c(a)=a\cdot c-c,
\]  belongs to $\Aut(X,+)$.

In order to prove that $\Br(X)$ is two-sided, again by Lemma \ref{lem:right-distributive-rho}, we need to
prove that the map
\[
f_c:(X,\oplus)\longrightarrow (X,\oplus),
\qquad
f_c(a)=a\cdot c\ominus c,
\]
is an automorphism of $(X,\oplus)$. Indeed, we will prove that $f_c\in \Aut(X, +)$, and then the result follows from Lemma \ref{lem:baer}(4). Now write $a\cdot c=\rho_c(a)+c.$
Then, we have
\[
\begin{aligned}
f_c(a)
&=(a\cdot c)\oplus(-c)\\
&=(\rho_c(a)+c)\oplus(-c)\\
&=\rho_c(a)+\frac{1}{2}[-c,\ \rho_c(a)]_+.
\end{aligned}
\]
Now for all $a, b \in X$,
\[
\begin{aligned}
f_c(a+b)
&=\rho_c(a+b)+\frac{1}{2}[-c, \rho(a+b)]_+\\
&=\rho_c(a)+\rho_c(b)+\frac{1}{2}([-c, \rho(a)]_+ + [-c, \rho(b)]_+)\\
&=\rho_c(a)+\frac{1}{2}[-c, \rho(a)]_++\rho_c(b)+\frac{1}{2}[-c, \rho(b)]_+\\
&=f_c(a)+f_c(b).
\end{aligned}
\]
Thus, $f_c$ is an endomorphism of $(X,+)$. Since $X$ is finite, if we prove that $\ker f_c={0}$, then we conclude that $f_c \in \Aut(X,+)$.
Let $a\in \ker f_c$. Then this implies that
\[
\rho_c(a)=-\frac{1}{2}[-c, \rho_c(a)]_+ \in Z(X,+).
\]
Since $\rho_c(a)\in Z(X,+)$, we have $\frac{1}{2}[-c, \rho_c(a)]_+=0$, and hence $\rho_c(a)=0$. As $\rho_c \in \Aut(X,+)$, we conclude that $a=0$. Therefore, $\ker f_c={0}$, which completes the proof.
\end{proof}

\begin{theo}
Let $X$ be a bi-skew brace of odd order such that $(X,+)$ has nilpotency
class at most $2$. For $a\in X$, define
\[
g_a:X\longrightarrow X,\qquad
g_a(b)=b+\frac{1}{2}[b,a]_+.
\]
Then $\Br(X)$ is bi-skew if and only if $
g_a\in \Aut(X,\cdot)$
for every $a\in X$.
\end{theo}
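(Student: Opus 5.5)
The plan is to reduce both bi-skewness conditions to statements about automorphisms of the common multiplicative group $(X,\cdot)$, and then compare them through a factorization involving $g_a$. Recall that a skew brace $(X,+,\cdot)$ is bi-skew exactly when $(X,\cdot,+)$ is also a skew brace, that is,
\[
a+(b\cdot c)=(a+b)\cdot a^{-1}\cdot (a+c)\qquad\text{for all } a,b,c\in X .
\]
Rewriting this in the same way as the lambda-map reformulation used in the proof of Theorem~\ref{thm:baer-transform} (and in the spirit of Lemma~\ref{lem:right-distributive-rho}), it is equivalent to asking that, for every $a\in X$, the map
\[
\tau_a:X\to X,\qquad \tau_a(b)=a^{-1}\cdot(a+b),
\]
be an automorphism of $(X,\cdot)$. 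I will record this as a preliminary lemma, valid for an arbitrary skew brace. Its proof is the routine equivalence between the displayed identity and the homomorphism property of $\tau_a$. Bijectivity of $\tau_a$ is automatic, since it is a composite of translations.

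Applied to $\Br(X)=(X,\oplus,\cdot)$, the lemma shows that $\Br(X)$ is bi-skew if and only if every map $\tau'_a(b)=a^{-1}\cdot(a\oplus b)$ lies in $\Aut(X,\cdot)$. This is meaningful because the multiplication is unchanged and $\Br(X)$ is a skew brace by Theorem~\ref{thm:baer-transform}.

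The key computation is the factorization $\tau'_a=\tau_a\circ g_a$. By Remark~\ref{rem:baer-formula},
\[
a\oplus b=a+b+\tfrac12[b,a]_+ .
\]
Since $\tfrac12[b,a]_+\in\gamma_2(X,+)\le Z(X,+)$, we can regroup this as
\[
a\oplus b=a+\bigl(b+\tfrac12[b,a]_+\bigr)=a+g_a(b).
\]
Hence
\[
\tau'_a(b)=a^{-1}\cdot\bigl(a+g_a(b)\bigr)=\tau_a(g_a(b)).
\]

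Since $X$ is bi-skew, each $\tau_a$ belongs to $\Aut(X,\cdot)$. It follows that $\tau'_a\in\Aut(X,\cdot)$ if and only if $g_a=\tau_a^{-1}\circ\tau'_a\in\Aut(X,\cdot)$. Quantifying over all $a\in X$ gives the theorem.

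The only step needing care is the preliminary characterization of bi-skewness via the maps $\tau_a$, and in particular getting the side and order conventions consistent with \cite{CV}. If the convention there yields instead the map $b\mapsto (a+b)\cdot a^{-1}$ or a variant, the same factorization argument applies verbatim, because $g_a$ enters only through the inner identity $a\oplus b=a+g_a(b)$. If a right-handed convention requires writing $b\oplus a$ instead, I would use
\[
b\oplus a=b+a+\tfrac12[a,b]_+=g'(b)+a
\]
with the analogous map, and check that it reduces to $g_a$ using $[a,b]_+=-[b,a]_+$ together with centrality.
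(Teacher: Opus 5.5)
Your proposal is correct and is essentially the paper's proof. The paper also uses that $h_a(b)=a^{-1}\cdot(a+b)$ lies in $\Aut(X,\cdot)$ because $X$ is bi-skew, characterizes bi-skewness of $\Br(X)$ by $\theta_a(b)=a^{-1}\cdot(a\oplus b)\in\Aut(X,\cdot)$, and deduces $\theta_a=h_a\circ g_a$ from $a\oplus b=a+g_a(b)$; your appeal to centrality when regrouping $a+b+\tfrac12[b,a]_+$ is unnecessary, since associativity alone gives this.
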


\begin{proof}
Since $X$ is bi-skew, the reversed brace $(X,\cdot,+)$ is a skew brace.
Hence, for every $a\in X$, the map
\[
h_a:X\longrightarrow X,\qquad
h_a(b)=a^{-1}\cdot(a+b)
\]
is an automorphism of the group $(X,\cdot)$.

Now $\Br(X)$ is bi-skew if and only if the reversed structure
$
(X,\cdot,\oplus)
$
is a skew brace. Equivalently, for every $a\in X$, the map
\[
\theta_a:X\longrightarrow X,\qquad
\theta_a(b)=a^{-1}\cdot(a\oplus b)
\]
is an automorphism of $(X,\cdot)$.

Using the definition of the Baer addition, we have
\[
a\oplus b=a+b+\frac{1}{2}[b,a]_+=a+g_a(b).
\]
Therefore $\theta_a(b)=a^{-1}\cdot(a+g_a(b))=h_a(g_a(b))$
which gives that
\[
\theta_a=h_a\circ g_a.
\] 
Since $h_a\in \Aut(X,\cdot)$, the map $\theta_a$ is an automorphism of
$(X,\cdot)$ if and only if $g_a$ is an automorphism of $(X,\cdot)$.
Hence $\Br(X)$ is bi-skew if and only if $g_a\in\Aut(X,\cdot)$ for all
$a\in X$.
\end{proof}

\begin{theorem}
\label{thm:central-nilpotency-preserved}
Let $X$ be a skew brace of odd order such that
$(X,+)$ has nilpotency class at most $2$. If $X$ is centrally nilpotent, then
so is $\Br(X)$. 
\end{theorem}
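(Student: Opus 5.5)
Since $X$ is centrally nilpotent, it has a central series of ideals
\[
0=I_0\subseteq I_1\subseteq \cdots \subseteq I_k=X,
\]
where each $I_{j}/I_{j-1}$ lies in the centre $Z(X/I_{j-1})$. The plan is to show, by induction on $|X|$, that this same chain is a central series of ideals of $\Br(X)$. The two tools are Proposition~\ref{prop:ideals-preserved}, which shows that central ideals stay central in the transform, and Proposition~\ref{pro:baer quotient}, which shows that the transform commutes with quotients.

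Concretely, one may assume $X\neq 0$. A centrally nilpotent skew brace has nonzero centre, so $Z(X)\neq 0$, and we set $I=Z(X)$. This is an ideal of $X$ contained in the centre, hence a central ideal. By Proposition~\ref{prop:ideals-preserved}(2), $I$ is then a central ideal of $\Br(X)$, so $I\subseteq Z(\Br(X))$.

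Next pass to $X/I$. It is again centrally nilpotent, since the quotient of a centrally nilpotent skew brace is centrally nilpotent; indeed, $X/Z(X)$ is centrally nilpotent of smaller class. Its additive group is a quotient of $(X,+)$, so it still has odd order and nilpotency class at most $2$, and $\Br(X/I)$ is defined. Because $|X/I|<|X|$, the induction hypothesis makes $\Br(X/I)$ centrally nilpotent. Proposition~\ref{pro:baer quotient} gives $\Br(X/I)=\Br(X)/I$, so $\Br(X)/I$ is centrally nilpotent.

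Finally, lift a central series of $\Br(X)/I$ back to $\Br(X)$. Its preimages form a chain of ideals of $\Br(X)$ running from $I$ to $X$, and each successive quotient is central in the corresponding quotient of $\Br(X)$. Prepending $0\subseteq I$, which is a valid central step because $I\subseteq Z(\Br(X))$, gives a central series of $\Br(X)$.

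The only delicate point is the correspondence in the last step. The identification $\Br(X/I)=\Br(X)/I$ must be an equality of skew braces and not just of underlying sets, so that ideals and centres in $\Br(X/I)$ are exactly those in $\Br(X)/I$. Proposition~\ref{pro:baer quotient} verifies that both operations agree, and I expect this to be sufficient. The argument also uses that $I$ is an ideal of $\Br(X)$, so that the quotient $\Br(X)/I$ makes sense; this is part of Proposition~\ref{prop:ideals-preserved}(2).
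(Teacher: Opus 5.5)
Your argument is correct, but it is not the route the paper takes. The paper's proof is a short deduction from outside machinery. By \cite[Theorem 5.3]{tsa}, a centrally nilpotent $X$ is left and right nilpotent. Theorems~\ref{theo: left nilpotency} and~\ref{thm:right nilpotency} then transfer both properties to $\Br(X)$. Since $(X,\oplus)$ is abelian, the same cited theorem turns them back into central nilpotency.

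You instead induct on $|X|$ directly. You use only Proposition~\ref{prop:ideals-preserved}(2), which makes $Z(X)$ an ideal of $\Br(X)$ contained in $Z(\Br(X))$, together with Proposition~\ref{pro:baer quotient}. The remaining ingredients are routine facts about annihilator series and the correspondence theorem for ideals. This is the same template the paper itself uses for supersolvability in Theorem~\ref{thm:supersolvability-preserved}.

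Your route gains two things. It is self-contained, with no dependence on \cite{tsa} or on the more computational left/right nilpotency theorems. It also gives a quantitative statement the paper does not make explicit: if you track the class through the induction, the upper annihilator series of $X$ is an annihilator series of $\Br(X)$. So the central nilpotency class of $\Br(X)$ is at most that of $X$.

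One small presentational mismatch: your first paragraph promises that the original chain $(I_j)$ itself is a central series of $\Br(X)$. The argument you actually write only produces \emph{some} central series. That suffices for the theorem. The stronger claim follows by running the same induction with the first nonzero $I_j$ in place of $Z(X)$.
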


\begin{proof}
Suppose that $X$ is centrally nilpotent. By \cite[Theorem 5.3]{tsa} $X$ is both left and right nilpotent. Then Theorem \ref{theo: left nilpotency} and Theorem \ref{thm:right nilpotency} yield that $\Br(X)$ is both left and right nilpotent. Since $(\Br(X),+)$ is abelian, again by \cite[Theorem 5.3]{tsa}, the result follows.
\end{proof}

\begin{theorem}\label{thm:solvability-preserved-derived}
Let $X$ be a skew brace of odd order such that
$(X,+)$ has nilpotency class at most $2$. If $X$ is solvable, then so is $\Br(X)$. 
\end{theorem}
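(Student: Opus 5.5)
Solvability of a skew brace (in the sense of \cite{CV}) means there is a chain of ideals $0=I_0\subseteq I_1\subseteq\cdots\subseteq I_k=X$ in which each factor $I_{j}/I_{j-1}$ is a trivial skew brace of abelian type, i.e.\ $(I_j/I_{j-1},+)$ is abelian and $I_j*I_j\subseteq I_{j-1}$. Equivalently, one can use the derived series $X^{(1)}=X$, $X^{(j+1)}=X^{(j)}*X^{(j)}$ together with additive commutators. I plan an induction on $|X|$ using Proposition~\ref{pro:baer quotient}, in the same style as the proof of Theorem~\ref{thm:right nilpotency}.

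\textbf{Reduction to a minimal ideal.} Assume $X\neq 0$ is solvable. Choose the last nonzero term $I$ of a solvable series of ideals of $X$. Then $I$ is an ideal with $(I,+)$ abelian and $I*I=0$, so $I$ is a trivial brace of abelian type. The quotient $X/I$ is solvable, and its additive group again has odd order and class at most $2$. By induction $\Br(X/I)$ is solvable, and Proposition~\ref{pro:baer quotient} gives $\Br(X/I)=\Br(X)/I$. By Proposition~\ref{prop:ideals-preserved}, $I$ is an ideal of $\Br(X)$. Since extensions of solvable skew braces by solvable ideals are solvable, it suffices to show that $I$, as a sub-brace of $\Br(X)$, is solvable. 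Concatenating the preimage of the series of $\Br(X)/I$ with a series of $I$ then gives a series for $\Br(X)$ consisting of ideals of $\Br(X)$.

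\textbf{The sub-brace $I$ in $\Br(X)$.} Since $(I,+)$ is abelian, Remark~\ref{rem:baer-formula} gives $\oplus=+$ on $I$. For $i,j\in I$ the formula $\mu_i(j)=\lambda_i(j)+\tfrac12[i,\lambda_i(j)]_+$ has $\lambda_i(j)\in I$, and $[i,\lambda_i(j)]_+=0$ because $I$ is additively abelian. Hence $\mu_i(j)=\lambda_i(j)=j$, since $I*I=0$. So $I$ is a trivial abelian brace inside $\Br(X)$, i.e.\ $I*_\oplus I=0$.

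\textbf{Main obstacle.} The main difficulty is bookkeeping about the definition of solvability, since some authors require a series of ideals of $X$ itself rather than a subnormal chain. The argument above keeps everything as ideals of $X$, hence of $\Br(X)$. The fact that $I$ is abelian as a brace in $\Br(X)$ is exactly what makes the factor at the bottom of the series acceptable. The remaining concern is verifying that preimages of ideals of $\Br(X)/I$ are ideals of $\Br(X)$, which is standard.
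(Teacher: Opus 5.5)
Your proof is correct, but it runs in the opposite direction from the paper's. (Your ideal-series definition of solvability is the standard equivalent of the commutator-derived-series notion the paper uses.)

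The paper works top-down. It takes $D=\partial(X)=[X,X]_X$ and notes that $X/D$ is already a trivial brace of abelian type, so $\Br(X)/D=\Br(X/D)=X/D$ by Proposition~\ref{pro:baer quotient}. It then appeals to closure of solvability under extensions. The weak point is that the paper only records that $D$ is solvable inside $X$, i.e.\ as $(D,+,\cdot)$. As an ideal of $\Br(X)$, however, $D$ carries the structure $(D,\oplus,\cdot)=\Br(D)$. Closing this requires an induction on $|X|$, and also a chain of ideals of $\Br(X)$ inside $D$ with abelian factors, which the paper does not spell out.

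You work bottom-up. You peel off the bottom term $I$ of an ideal series; this is the smallest nonzero term rather than the ``last'' one, and it need not be a minimal ideal. You check directly that $I$ stays a trivial brace of abelian type in $\Br(X)$, because $\oplus=+$ on $I$ and the correction term $\tfrac12[i,\lambda_i(j)]_+$ vanishes. Proposition~\ref{pro:baer quotient} and induction then handle $X/I$.

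What actually carries your argument is the explicit concatenation, not the extension-closure slogan you quote. The bottom piece is a single abelian factor that is already an ideal of $\Br(X)$, and the remaining terms are preimages of ideals of $\Br(X)/I$. So you obtain an honest ideal series of $\Br(X)$ without needing any general extension property for skew braces. In effect you prove the stronger statement that every solvable ideal series of $X$ is also a solvable ideal series of $\Br(X)$, in the spirit of the paper's proof of Theorem~\ref{thm:supersolvability-preserved}.

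The paper's route avoids the small computation on the abelian factor. That computation, however, is exactly what is needed to make its final step rigorous.
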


\begin{proof}
We understand the solvability with respect to the commutator-derived series as in \cite{BEJP}. If $X=0$, there is nothing to prove. Assume now that $X\neq 0$ and put
\[
D=\partial(X)=[X,X]_X.
\]
Then $D$ is a solvable ideal of $X$ and
$
X/D
$
is an abelian skew brace. In particular, $X/D$ is solvable. By Proposition~\ref{prop:ideals-preserved}, $D$ is an ideal of $\Br(X)$.
Moreover, by Proposition \ref{pro:baer quotient}, we have
\[
\Br(X)/D=\Br(X/D)=
X/D
\]
since $X/D$ is already a brace. Hence $\Br(X)/D$ is solvable. Since solvable extensions of skew braces by solvable ideals are solvable, the brace $\Br(X)$ is solvable.
\end{proof}
\begin{theorem}\label{thm:supersolvability-preserved}
Let $X$ be a skew brace of odd order such that
$(X,+)$ has nilpotency class at most $2$. If $X$ is supersolvable, then
so is $\Br(X)$. 
\end{theorem}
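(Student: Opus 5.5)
We use the standard definition: $X$ is supersolvable if there is a chain of ideals
\[
0=I_0\subseteq I_1\subseteq \cdots \subseteq I_k=X
\]
of $X$ such that each factor $I_{j}/I_{j-1}$ is cyclic as an additive group and is a trivial brace, that is, $I_j * X\subseteq I_{j-1}$ and $X*I_j\subseteq I_{j-1}$. The plan is to show that this same chain witnesses supersolvability of $\Br(X)$.

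\textbf{Step 1 (ideals).} By Proposition~\ref{prop:ideals-preserved}, each $I_j$ is an ideal of $\Br(X)$.

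\textbf{Step 2 (cyclic factors).} We need $(I_j/I_{j-1},\oplus)$ to be cyclic. Proposition~\ref{pro:baer quotient} applied to $I_{j-1}$ shows that the additive group of $\Br(X)/I_{j-1}$ is the Baer addition of $(X/I_{j-1},+)$. Restricted to the cyclic subgroup $I_j/I_{j-1}$, all elements commute, so by Remark~\ref{rem:baer-formula} we get $\oplus=+$ there. Hence the factor is cyclic for $\oplus$ as well. Alternatively, Lemma~\ref{lem:baer}(1) preserves element orders, and that also gives cyclicity.

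\textbf{Step 3 (trivial factors, the main obstacle).} We must show
\[
a*_\oplus i\in I_{j-1}
\quad\text{and}\quad
i*_\oplus a\in I_{j-1}
\qquad\text{for } a\in X,\ i\in I_j .
\]
Work in $\bar X=X/I_{j-1}$, which is legitimate by Proposition~\ref{pro:baer quotient}. The hypothesis gives $\lambda_a(\bar i)=\bar i$ and $\lambda_{\bar i}=\mathrm{id}$ on $\bar X$.

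For the first product, the formula
\[
a*_\oplus b=\lambda_a(b)-b+\tfrac12[a,\lambda_a(b)]_++\tfrac12[-b,\lambda_a(b)]_+
\]
gives $a*_\oplus \bar i=\tfrac12[a,\bar i]_+$. Triviality of the factor gives only $\lambda$-centrality, not additive centrality, of $\bar i$. However, $I_j/I_{j-1}$ is a cyclic normal subgroup of $(\bar X,+)$, so $[a,\bar i]_+\in I_j/I_{j-1}$. This is not necessarily zero, so the approach needs a refinement.

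The fix is to refine the chain first. Since $(X,+)$ has odd order and supersolvable-type normal series, we may refine so that each factor $I_j/I_{j-1}$ has prime order $p$, using that the subgroups of a cyclic factor are characteristic and hence still ideals. With prime-order factors, $(\bar X,+)$ acts on the order-$p$ group $I_j/I_{j-1}$ by conjugation, and we want this action to be trivial. This holds because $X*I_j\subseteq I_{j-1}$ together with the standard fact that the factors of supersolvable skew braces are central in the additive group in the nilpotent setting. If that fact is not available, the intended approach is to use the brace identity relating conjugation in $(X,+)$ to $\lambda$ and $*$. Namely,
\[
a+i-a=\lambda_a(\lambda^{-1}_{a}\ldots)
\]
expressed through $a*i$ and $(a*i)$-terms, which shows that $i$ is additively central modulo $I_{j-1}$. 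Once $\bar i$ is additively central, both half-commutator terms vanish, so $a*_\oplus\bar i=0$.

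For the second product, since $\lambda_{\bar i}=\mathrm{id}$ we get $\bar i*_\oplus a=\tfrac12[\bar i,a]_+=0$ by the same additive centrality. Hence the chain exhibits $\Br(X)$ as supersolvable.
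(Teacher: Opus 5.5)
\textbf{There is a genuine gap, and it comes from your definition.} Your Steps 1 and 2 are exactly the paper's proof. The $I_j$ remain ideals by Proposition~\ref{prop:ideals-preserved}, $\Br(X)/I_{j-1}=\Br(X/I_{j-1})$ by Proposition~\ref{pro:baer quotient}, and $\oplus=+$ on the factor because its elements commute additively.

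The trouble is what you take ``the factor is a trivial brace'' to mean. That is an \emph{intrinsic} condition on $I_j/I_{j-1}$: $I_j*I_j\subseteq I_{j-1}$, together with $[I_j,I_j]_+\subseteq I_{j-1}$ for abelianness. Your ``that is'' replaces it with the much stronger relative condition $X*I_j\subseteq I_{j-1}$, $I_j*X\subseteq I_{j-1}$. The paper uses the intrinsic notion: the factors are abelian braces of prime order, and in fact every skew brace of prime order is trivial. With that notion the proof is finished after Step 2. The multiplication is unchanged, so the factor $I_j/I_{j-1}$ of $\Br(X)$ is literally the same brace of prime order as the factor of $X$. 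Step 3 answers a question the theorem does not ask.

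If you want to keep your stronger notion, Step 3 is unproved as written. Everything rests on $\bar i$ being central in $(X/I_{j-1},+)$, and neither justification you offer establishes this. The hypothesis $X*I_j\subseteq I_{j-1}$ says nothing about additive conjugation, and no identity involving only $\lambda$ and $*$ can give it. For example, take the trivial skew brace $(G,+,+)$ on the nonabelian group of order $21$. The chain $0\subseteq C_7\subseteq G$ satisfies all your conditions, since every $*$-product is $0$, yet $C_7$ is not central in $(G,+)$.

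The missing input is the nilpotency of $(X,+)$. Once the factors have prime order, $I_j/I_{j-1}$ is a normal subgroup of order $p$ of the finite nilpotent group $(X/I_{j-1},+)$. It therefore meets the center nontrivially, and so lies in the center. Then $\tfrac12[a,\bar i]_+=\tfrac12[\bar i,a]_+=0$, and your formulas give $a*_\oplus\bar i=\bar i*_\oplus a=0$.

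Your refinement step also needs one more remark. Because $X*I_j\subseteq I_{j-1}$, multiplication and addition coincide on the factor, so its multiplicative group is the same cyclic group. This is what makes its subgroups normal in $(X/I_{j-1},\cdot)$, and hence ideals; being characteristic in the additive group alone does not give that.
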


\begin{proof}
Assume that $X$ is supersolvable. Then there exists a series of ideals
\[
0=I_0\leq I_1\leq \cdots \leq I_n=X
\]
such that, for every $1\leq j\leq n$, the factor
$
I_j/I_{j-1}
$
is an abelian brace of prime order. By Proposition~\ref{prop:ideals-preserved}, each $I_j$ is an ideal of $\Br(X)$.

We claim that the same series
\[
0=I_0\leq I_1\leq \cdots \leq I_n=\Br(X)
\]
is a supersolvable series for $\Br(X)$: Fix $1\leq j\leq n$. By Proposition~\ref{pro:baer quotient}, we have
\[
\Br(X)/I_{j-1}=\Br(X/I_{j-1}).
\]
Therefore the factor
$
I_j/I_{j-1}
$
appearing in $\Br(X)/I_{j-1}$ is the Baer transform of the corresponding
factor in $X/I_{j-1}$. Since $I_j/I_{j-1}$ is an abelian brace of prime order, its additive group is
cyclic of prime order and its additive and multiplicative operations coincide.
In particular, for all $u,v\in I_j/I_{j-1}$, we have
$
[v,u]_+=0.
$
Hence the Baer addition on this factor is
\[
u\oplus v
=
u+v+\frac{1}{2}[v,u]_+
=
u+v.
\]
Thus the Baer transform does not change this factor. The multiplication is
unchanged as well. Therefore

\[
0=I_0\leq I_1\leq \cdots \leq I_n=\Br(X)
\]
is an ideal series of $\Br(X)$ whose factors are abelian braces of prime
order. Hence $\Br(X)$ is supersolvable.
\end{proof}
\begin{remark} Consequently, although the Baer transform preserves the properties considered above in the forward direction, from $X$ to $\Br(X)$, the reverse implications for two-sidedness, bi-skewness, central nilpotency, solvability, and supersolvability require additional assumptions and should not be asserted in general. \end{remark}

\section{Applications}

In this section,  we first show that the Baer transform preserves internal direct product decompositions into ideals.

\begin{theo}\label{theo:direct-product-many}
Let $X$ and $\operatorname{Br}(X)$ be as in
Theorem~\ref{thm:baer-transform}. If
\[
X=I_1\oplus_X I_2\oplus_X \cdots \oplus_X I_t
\]
is an internal direct product decomposition of $X$ into nonzero ideals, then
\[
\operatorname{Br}(X)
=
I_1\oplus_{\operatorname{Br}(X)}I_2
\oplus_{\operatorname{Br}(X)}
\cdots
\oplus_{\operatorname{Br}(X)}I_t
\]
is an internal direct product decomposition of $\operatorname{Br}(X)$ into the
same nonzero ideals. 
\end{theo}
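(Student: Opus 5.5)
By Proposition~\ref{prop:ideals-preserved}, each $I_j$ is already an ideal of $\operatorname{Br}(X)$, and each $I_j$ is nonzero. So the task is to check that $\operatorname{Br}(X)$ is the internal direct product of these ideals. We use the standard criterion: a skew brace $Y$ is the internal direct product of ideals $J_1,\dots,J_t$ exactly when two conditions hold. First, $(Y,+)$ must be the internal direct product of the subgroups $(J_k,+)$. Second, $(Y,\cdot)$ must be the internal direct product of the $(J_k,\cdot)$. For ideals, the second condition follows from the first, because for ideals $J\cap J'=0$ forces $J\cdot J'=J+J'$.

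The multiplicative condition needs no work, since $X$ and $\operatorname{Br}(X)$ share the group $(X,\cdot)$ and the subsets $I_j$ are the same. It therefore suffices to prove that $(X,\oplus)$ is the internal direct sum of the subgroups $(I_j,\oplus)$.

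\textbf{Additive step.} The $I_j$ are normal in $(X,+)$ and intersect trivially, so elements of distinct $I_j$ commute in $(X,+)$: indeed, $[u,v]_+\in I_j\cap I_k=0$ for $u\in I_j$ and $v\in I_k$ with $j\neq k$. By Remark~\ref{rem:baer-formula}, $u\oplus v=u+v$ for such pairs. Now take $x\in X$ and write it uniquely as $x=x_1+\cdots+x_t$ with $x_j\in I_j$. Since the summands pairwise commute, induction on $t$ gives
\[
x_1\oplus\cdots\oplus x_t=x_1+\cdots+x_t=x.
\]
The partial sums $x_1+\cdots+x_{k}$ commute with $x_{k+1}$, which justifies each step of this induction. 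Hence $X=I_1\oplus\cdots\oplus I_t$ as $\oplus$-sums.

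For uniqueness, suppose $\bigoplus_j y_j=\bigoplus_j z_j$ with $y_j,z_j\in I_j$. By the same computation, both sides equal ordinary $+$-sums of elements from the distinct $I_j$. Uniqueness of the decomposition in $(X,+)$ then gives $y_j=z_j$ for every $j$. Equivalently, $I_j\cap(\bigoplus_{k\ne j}I_k)=I_j\cap\sum_{k\ne j}I_k=0$, because the $\oplus$-span of the $I_k$ with $k\neq j$ coincides with their $+$-span.

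\textbf{Main obstacle.} The only delicate point is making sure that the $\oplus$-span of a family of the $I_k$ equals their $+$-span. This rests on the pairwise commutation of elements from different ideals, together with closure of each $I_k$ under $\oplus$, which Proposition~\ref{prop:ideals-preserved} provides. Once that is in hand, the decomposition of $\operatorname{Br}(X)$ into the ideals $I_1,\dots,I_t$ follows.
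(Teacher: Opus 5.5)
Your proof is correct, and it reaches the conclusion by a slightly different mechanism than the paper.

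The paper proves a general set-level identity $I+J=I\oplus J$ for any two ideals $I,J$ of $X$. Its argument is that the correction term $\frac12[j,i]_+$ lies in $I\cap J$, so $i\oplus j\in I+J$, and conversely $i+j=i\oplus\bigl(j-\frac12[j,i]_+\bigr)\in I\oplus J$. It then applies this to the $t$ summands and asserts the intersection condition.

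You instead use the pairwise trivial intersections $I_j\cap I_k=0$ of the given decomposition. These give $[u,v]_+=0$ for $u\in I_j$, $v\in I_k$, so by Remark~\ref{rem:baer-formula} the Baer sum of elements from distinct summands is literally their ordinary sum.

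Your route gives more than equality of sets: every $x$ has the same components $x_j$ in both decompositions. It also makes the condition $I_k\cap\bigoplus_{j\neq k}I_j=0$ immediate, whereas the paper leaves it implicit (it requires iterating the two-ideal identity, using that sums of ideals are again ideals). The paper's identity, on the other hand, is more general: it does not need $I\cap J=0$, so it describes $I\oplus J$ for arbitrary pairs of ideals.

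One small wording point: $J\cdot J'=J+J'$ holds for any ideals $J,J'$, indeed whenever $J'$ is a left ideal, regardless of whether $J\cap J'=0$. The trivial intersection is what makes the product direct, not what forces the equality. This does not affect your argument, since you also observe that the multiplicative condition is inherited verbatim from $X$.
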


\begin{proof}
Since
\[
X=I_1\oplus_X I_2\oplus_X \cdots \oplus_X I_t
\]
is an internal direct product decomposition, we have
\[
I_1+I_2+\cdots+I_t=X
\]
and, for each $k$,
\[
I_k\cap \bigoplus_{j\neq k} I_j=0.
\]

We shall next prove that for any ideals $I, J \in X$, we have  $I+J=I\oplus_{} J$: Let $i\in I$ and $j\in J$. Then as both $I$ and $J$ are normal subgroups of $(X,+)$, we have $\frac{1}{2}[j,i]_+ \in I\cap J$. Then
\[
i\oplus j=i+j+\frac{1}{2}[j,i]_+ \in I+J
\]
and hence $I\oplus J \subseteq I+ J$. Conversely, observe that
\[
i+j= i\oplus \left( j- \frac{1}{2}[j,i]_+\right) \in I\oplus J
\]
and so $I+J\subseteq I\oplus J$.Thus, we have
\[
 I_1\oplus \cdots \oplus I_t =\Br(X)
\]
and
\[
I_k\cap \bigoplus_{\Br(X),\ j\neq k} I_j=0.
\]
Consequently
\[
\operatorname{Br}(X)
=
I_1\oplus_{\operatorname{Br}(X)}I_2
\oplus_{\operatorname{Br}(X)}
\cdots
\oplus_{\operatorname{Br}(X)}I_t
\]
is an internal direct product decomposition of $\operatorname{Br}(X)$ into the
same ideals.
\end{proof}

\begin{remark}
This gives a useful application of the Baer transform. The brace
$\operatorname{Br}(X)$ is of abelian type, and hence its ideal structure may be
more accessible than that of the original skew brace $X$. If one proves
that $\operatorname{Br}(X)$ is indecomposable, then
Theorem~\ref{theo:direct-product-many} immediately implies that $X$ is indecomposable. Thus the Baer transform provides a useful sufficient criterion for
indecomposability of skew braces.
\end{remark}

We finish with another application showing how the Baer transform can be used to
study automorphisms of finite $p$-skew braces. The idea is to pass from
the additive $p$-group of a skew brace to a suitable characteristic
subgroup of nilpotency class at most $2$, and then apply the Baer transform to
this subgroup. The resulting brace of abelian type detects the $p'$-subgroups
of the automorphism group of the original skew brace.

We shall use the following standard form of Thompson's critical subgroup
theorem.

\begin{lemma}[Thompson critical subgroup theorem]\label{lem:critical}
Let $P$ be a finite $p$-group. Then $P$ has a characteristic subgroup $C$,
called a critical subgroup, such that $C$ is nilpotent of class at most $2$.
Moreover, if $A$ is a $p'$-group of automorphisms of $P$ and $A$ acts
trivially on $C$, then $A$ acts trivially on $P$.
\end{lemma}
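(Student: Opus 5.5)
The statement is Thompson's critical subgroup theorem, a standard result from finite group theory. I would prove it by constructing $C$ explicitly in the usual way, following Gorenstein or Isaacs.

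\textbf{Construction of $C$.} Consider the family $\mathcal{S}$ of characteristic subgroups $C$ of $P$ satisfying
\[
\Phi(C)\le Z(C),\qquad [P,C]\le Z(C),\qquad C_P(C)=Z(C).
\]
First I would check that $\mathcal{S}$ is nonempty. If every characteristic abelian subgroup of $P$ is cyclic, one handles this directly. In general, one builds a member of $\mathcal{S}$ from a maximal characteristic abelian subgroup.

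The cleanest route is to take a characteristic subgroup $C$ of $P$ maximal subject to $\Phi(C)\le Z(C)$ and $[P,C]\le Z(C)$. The family of such subgroups contains $Z(P)$, so a maximal one exists. Then:
\begin{itemize}
\item \emph{Class at most $2$.} Since $[C,C]\le[P,C]\le Z(C)$, the subgroup $C$ is nilpotent of class at most $2$.
\item \emph{$C_P(C)\le C$.} This is the main obstacle and is done by contradiction. Put $D=C_P(C)$ and suppose $D\not\le C$. Then $Z(C)<D$, and $D$ is characteristic. Consider $\bar P=P/Z(C)$. The image of $D$ is a nontrivial normal subgroup of $\bar P$, so it meets $Z(\bar P)$ nontrivially. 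Let $E/Z(C)=\Omega_1(Z(\bar P))\cap D/Z(C)$; this is characteristic. Then set $C^\ast=CE$. One verifies that $[P,E]\le Z(C)$ and that $E$ centralizes $C$. Hence $[P,C^\ast]\le Z(C)\le Z(C^\ast)$. Also $E^p\le Z(C)$, and $C^\ast/Z(C)$ is abelian, which gives $\Phi(C^\ast)\le Z(C^\ast)$. Since $E\not\le C$, the subgroup $C^\ast$ properly contains $C$. This contradicts maximality, so $C_P(C)\le C$.
\end{itemize}

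\textbf{The $p'$-action statement.} Let $A$ be a $p'$-group acting on $P$ and centralizing $C$. Since $C$ is characteristic, $A$ stabilizes the series $1\le C\le P$ in the following sense. For $x\in P$ and $a\in A$, the map $c\mapsto c^x$ on $C$ commutes with $a$, because $C$ is $A$-fixed. From this one gets $x^{-1}x^a\in C_P(C)\le C$. So $A$ acts trivially on both $C$ and $P/C$. By the standard coprime-action lemma, an automorphism group of coprime order stabilizing a normal series acts trivially; here this follows from $P=C_P(A)C$ with $C\le C_P(A)$. Therefore $A$ acts trivially on $P$.

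The expected hard step is the maximality argument for $C_P(C)\le C$, specifically checking that the enlarged subgroup $C^\ast$ still satisfies $\Phi(C^\ast)\le Z(C^\ast)$. This rests on the identity $(xy)^p\equiv x^py^p$ modulo commutators, valid in class-$2$ groups for odd $p$; for $p=2$ one needs the usual extra care. Alternatively, since this is a standard theorem, I would simply cite Gorenstein, \emph{Finite Groups}, Theorem 5.3.11, and present the above only as a sketch.
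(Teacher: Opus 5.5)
The paper gives no proof of this lemma; it quotes Thompson's critical subgroup theorem as a standard result, so your fallback of citing Gorenstein, Theorem 5.3.11, is exactly what the paper does. The argument you sketch is the standard textbook proof, and it is correct:
\begin{itemize}
\item Take $C$ characteristic and maximal with $\Phi(C)\le Z(C)$ and $[P,C]\le Z(C)$; this family contains $Z(P)$.
\item Let $E/Z(C)=\Omega_1(Z(P/Z(C)))\cap C_P(C)/Z(C)$. Then $C^\ast=CE$ is characteristic, lies in the family, and properly contains $C$ unless $C_P(C)\le C$. Hence $C_P(C)=Z(C)$.
\item For $x\in P$ and $a\in A$ you correctly get $x^{-1}x^a\in C_P(C)\le C$.
\item Coprime action then gives $P=C_P(A)C=C_P(A)$.
\end{itemize}

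Two remarks.

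First, the step you single out as the hard one needs no power identity and no special care at $p=2$. Since $E$ centralizes $C$, for $c\in C$ and $e\in E$ you have $(ce)^p=c^pe^p\in Z(C)$. Equivalently, $C^\ast/Z(C)$ is abelian and generated by elements of order dividing $p$, hence elementary abelian, so $\Phi(C^\ast)\le Z(C)\le Z(C^\ast)$. This is exactly the reasoning you already wrote ("$E^p\le Z(C)$ and $C^\ast/Z(C)$ is abelian"), together with $C^p\le\Phi(C)\le Z(C)$. It is uniform in $p$. Moreover, $(xy)^p\equiv x^py^p$ modulo the derived subgroup holds in every group, so parity is never an issue there.

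Second, your opening paragraph is superfluous and can be dropped. It defines a family $\mathcal{S}$ that includes the condition $C_P(C)=Z(C)$ and mentions the case where all characteristic abelian subgroups are cyclic. The maximality construction starting from $Z(P)$ yields $C_P(C)=Z(C)$ as a conclusion, so no separate nonemptiness argument or cyclic case is needed.
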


\begin{theorem}\label{thm:critical-subgroup-application}
Let $X$ be a skew brace such that $(X,+)$ is a
$p$-group, where $p$ is odd. Let $C$ be a Thompson critical subgroup of the
additive group $(X,+)$.  Then the kernel of the
restriction map
\[
\rho:\Aut(X)\longrightarrow \operatorname{Aut}(\operatorname{Br}(C)),
\qquad
\rho(\alpha)=\alpha|_C,
\]
is contained in $O_p(\Aut(X))$. In particular, if $O_p(\Aut(X))=1$, then $\rho$ is an injection.
\end{theorem}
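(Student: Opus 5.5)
The plan is to show, in order, that $\operatorname{Br}(C)$ makes sense, that $\rho$ is a well-defined group homomorphism, and that its kernel is a normal $p$-subgroup of $\Aut(X)$; the last fact forces the kernel into $O_p(\Aut(X))$.

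\emph{Step 1: $C$ is a sub-skew brace, so $\operatorname{Br}(C)$ is defined.} By Lemma~\ref{lem:critical}, $C$ is characteristic in $(X,+)$. For each $a\in X$ the lambda map $\lambda_a$ is an automorphism of $(X,+)$, so $\lambda_a(C)=C$.
\begin{itemize}
\item For $a,b\in C$ we get $a\cdot b=a+\lambda_a(b)\in C$.
\item Also $a^{-1}=-\lambda_{a^{-1}}(a)=\lambda_{a^{-1}}(-a)\in C$, using that $\lambda_{a^{-1}}$ preserves $C$.
\end{itemize}
Hence $C$ is a left ideal of $X$, and in particular a sub-skew brace $(C,+,\cdot)$. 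Its additive group is a $p$-group with $p$ odd, so it has odd order, and by Lemma~\ref{lem:critical} it has nilpotency class at most $2$. Theorem~\ref{thm:baer-transform} therefore applies to $C$, and $\operatorname{Br}(C)$ is defined. I expect this verification, that $C$ is genuinely a sub-skew brace and not merely an additive subgroup, to be the main point needing care. The rest is formal.

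\emph{Step 2: $\rho$ is a well-defined homomorphism.} Let $\alpha\in\Aut(X)$. Then $\alpha\in\Aut(X,+)$, so $\alpha(C)=C$ because $C$ is characteristic. Since $\alpha$ preserves both $+$ and $\cdot$, its restriction $\alpha|_C$ is an automorphism of the skew brace $C$. Applying Lemma~\ref{lem:automorphisms} to $C$ gives $\alpha|_C\in\Aut(\operatorname{Br}(C))$. Restriction respects composition, so $\rho$ is a homomorphism. Its kernel $K$ is therefore a normal subgroup of $\Aut(X)$.

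\emph{Step 3: $K$ is a $p$-group.} Let $\beta\in K$ have order prime to $p$, and put $A=\langle\beta\rangle$.
\begin{itemize}
\item $A$ acts on the $p$-group $(X,+)$ as a $p'$-group of automorphisms.
\item By definition of $K$, $A$ acts trivially on $C$.
\item By Lemma~\ref{lem:critical}, $A$ therefore acts trivially on $(X,+)$.
\end{itemize}
Since an element of $\Aut(X)$ is determined by its action on the underlying set, $\beta=\mathrm{id}$. Thus $K$ has no nontrivial $p'$-elements, so $K$ is a $p$-group.

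\emph{Conclusion.} $K$ is a normal $p$-subgroup of $\Aut(X)$, so $K\le O_p(\Aut(X))$. In particular, if $O_p(\Aut(X))=1$, then $K=1$ and $\rho$ is injective.
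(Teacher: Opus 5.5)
Your proposal is correct and follows essentially the same route as the paper. You use that $C$ is characteristic, hence $\lambda$-invariant, so it is a sub-skew brace and $\Br(C)$ is defined. Restriction then lands in $\Aut(\Br(C))$. Thompson's lemma shows that the normal subgroup $\ker\rho$ has no nontrivial $p'$-elements, so $\ker\rho\le O_p(\Aut(X))$. Your explicit check that $C$ is closed under $\cdot$ and inverses only spells out a step the paper asserts in one line.
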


\begin{proof}
Since $C$ is a Thompson critical subgroup of $(X,+)$, it
is characteristic in $(X,+)$ and hence $C$ is a strong left ideal of $X$. In particular, $(C,+,\cdot)$ is a subbrace of $X$. Note that as $p$ is odd and $(C,+)$ has nilpotency class at most $2$, we have the Baer transform of $(C,+,\cdot)$, that is,
$\Br(C)=(C,\oplus,\cdot)$.

Now let $\alpha\in \Aut(X)$. Since $C$ is characteristic in the additive
group $(X,+)$ and $\alpha$ is an automorphism of $(X,+)$, we have
$
\alpha(C)=C.
$
Hence the restriction $\alpha|_C$ is well-defined. Moreover, since $\alpha$ is a brace automorphism of $X$, it preserves both
operations $+$ and $\cdot$ and so
$
\alpha|_C\in \Aut(C,+,\cdot).
$

Note that $\alpha|_C\in (\Aut(C,\oplus))$ by Lemma \ref{lem:baer}, and so it follows that $\alpha|_C\in \Aut(\Br(C))$.

Consequently the restriction map
\[
\rho:\Aut(X)\longrightarrow \Aut(\Br(C)),\qquad
\rho(\alpha)=\alpha|_C
\]
is a well-defined group homomorphism.

Now suppose that $\rho(\alpha)=1$ for some $\alpha\in \Aut (X)$ of $p'$-order. It means that $\alpha$ induces the identity map on $\Br(C)$, and so  $\alpha$ induces the identity map on $C$. Then $\alpha|_C$ is the identity map, that is, it acts trivially on $C$. Since $\alpha$ has $p'$-order, $\alpha$ acts trivially on $(X,+)$ by Lemma~\ref{lem:critical},  and hence $\alpha=\mathrm{id}$. Thus, $\ker \rho$ is a $p$-group. Since $\ker \rho$ is a normal $p$-subgroup of $\Aut(X)$, we have $\ker \rho \leq O_p(\Aut(X))$. Then the result follows.

\end{proof}

\begin{corollary}\label{cor:aut-brD-pgroup}
Let $X$ be a skew brace such that $(X,+)$ is a
$p$-group, where $p$ is odd. Let $C$ be a Thompson critical subgroup of
$(X,+)$. If $\operatorname{Aut}(\operatorname{Br}(C))$ is a $p$-group, then
$\operatorname{Aut}(X)$ is a $p$-group as well.
\end{corollary}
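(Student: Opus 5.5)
The plan is to deduce the corollary from Theorem~\ref{thm:critical-subgroup-application}, but without assuming $O_p(\Aut(X))=1$. We only use the part of its proof showing that $\ker\rho$ is a $p$-group. Recall that $\rho:\Aut(X)\to\Aut(\Br(C))$, $\alpha\mapsto\alpha|_C$, is a well-defined group homomorphism. This uses two facts: $C$ is characteristic in $(X,+)$, so $\alpha(C)=C$; and Lemma~\ref{lem:baer}(4), applied to $(C,+)$, so $\alpha|_C$ respects $\oplus$. The same argument shows that any $p'$-element of $\ker\rho$ centralizes $C$. By Lemma~\ref{lem:critical} it then centralizes $(X,+)$, and so it is the identity, since $X$ and $(X,+)$ share the underlying set. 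Hence $\ker\rho$ is a $p$-group.

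The argument is then a group-order count. By the first isomorphism theorem, $\Aut(X)/\ker\rho\cong\rho(\Aut(X))\le\Aut(\Br(C))$. Since $\Aut(\Br(C))$ is a $p$-group by hypothesis, its subgroup $\rho(\Aut(X))$ is a $p$-group. Therefore $|\Aut(X)|=|\ker\rho|\cdot|\rho(\Aut(X))|$ is a power of $p$, as both factors are. All groups here are finite, because $X$ is finite.

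As an alternative, more elementwise route, take $\alpha\in\Aut(X)$ of order coprime to $p$. Its image $\rho(\alpha)$ has order dividing $|\alpha|$ and lies in a $p$-group, so $\rho(\alpha)=1$. Then $\alpha$ is the identity by Lemma~\ref{lem:critical} applied to $A=\langle\alpha\rangle$. So $\Aut(X)$ has no nontrivial $p'$-elements, and by Cauchy's theorem it is a $p$-group.

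The only delicate point is making sure that Lemma~\ref{lem:critical} applies to $\langle\alpha\rangle$ acting on the $p$-group $(X,+)$, which holds because $\alpha$ is in particular an additive automorphism. Everything else is routine, so I expect no real obstacle.
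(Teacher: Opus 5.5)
Your proof is correct and follows the same route the paper intends. The paper states the corollary without proof, as an immediate consequence of Theorem~\ref{thm:critical-subgroup-application}: $\ker\rho$ is a $p$-group, and $\Aut(X)/\ker\rho$ embeds in the $p$-group $\Aut(\Br(C))$; you also need not reopen that theorem's proof, since its statement $\ker\rho\le O_p(\Aut(X))$ already shows that $\ker\rho$ is a $p$-group.
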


\end{document}